\def\endpf{\relax\ifmmode\expandafter\endproofmath\else
  \unskip\nobreak\hfil\penalty50\hskip.75em\hbox{}\nobreak\hfil\bull
  {\parfillskip=0pt \finalhyphendemerits=0 \bigbreak}\fi}
\def\bull{\vbox{\hrule\hbox{\vrule\kern3pt\vbox{\kern6pt}\kern3pt\vrule}\hrule}}
\newtheorem{defn}{Definition}[section]
\newtheorem{lemma}[defn]{Lemma}
\newtheorem{theorem}[defn]{Theorem}
\newtheorem{definition}[defn]{Definition}
\newtheorem{remark}[defn]{Remark}
\newtheorem{proposition}[defn]{Proposition}
\newtheorem{conjecture}[defn]{Conjecture}
\newtheorem{maintheorem}{Theorem}
\newcommand{\zz}{{\mathbb Z}}
\newcommand{\rr}{{\mathbb R}}
\newcommand{\qq}{{\mathbb Q}}
\newcommand{\spin}{\ifmmode{\rm Spin}\else{${\rm spin}$\ }\fi}
\newcommand{\spinc}{\ifmmode{{\rm Spin}^c}\else{${\rm spin}^c$\ }\fi}
\newcommand{\calu}{\mathcal{U}}
\newcommand{\CP}{\mathbb{CP}}
\definecolor{Gray}{gray}{0.8}
\newenvironment{narrow}[2]{%
 \begin{list}{}{%
  \setlength{\topsep}{0pt}%
  \setlength{\leftmargin}{#1}%
  \setlength{\rightmargin}{#2}%
  \setlength{\listparindent}{\parindent}%
  \setlength{\itemindent}{\parindent}%
  \setlength{\parsep}{\parskip}%
 }%
\item[]}{\end{list}}
\newif\ifpic
\DeclareMathOperator{\lk}{lk}
\DeclareMathOperator{\rk}{rk}
\DeclareMathOperator{\cross}{cr}
\begin{document}

\title{An algorithm to find \\ ribbon disks for alternating knots}
\author[Brendan Owens]{Brendan Owens}
\address{School of Mathematics and Statistics \newline\indent 
University of Glasgow \newline\indent 
Glasgow, G12 8SQ, United Kingdom}
\email{brendan.owens@glasgow.ac.uk}
\author[Frank Swenton]{Frank Swenton}
\address{Department of Mathematics \newline\indent 
Middlebury College \newline\indent
Middlebury, VT 05753, USA }
\email{fswenton@middlebury.edu}
\date{\today}
\thanks{B. Owens was supported in part by  EPSRC grant EP/I033754/1.}

\begin{abstract}  We describe an algorithm to find ribbon disks for alternating knots, and the results of a computer implementation of this algorithm.  The algorithm is underlain by a slice link obstruction coming from Donaldson's diagonalisation theorem.  It successfully finds ribbon disks for slice two-bridge knots and for the connected sum of any alternating knot with its reverse mirror, as well as for 662,903 prime alternating knots of 21 or fewer crossings.  We also identify some  examples of ribbon alternating knots for which the algorithm fails to find ribbon disks, though a related search identifies all such examples known.  Combining these searches with known obstructions, we resolve the sliceness of all but 3,276 of the over 1.2 billion prime alternating knots with 21 or fewer crossings.
\end{abstract}

\maketitle

\pagestyle{myheadings}
\markboth{BRENDAN OWENS AND FRANK SWENTON}{ALGORITHMIC RIBBON DISKS FOR ALTERNATING KNOTS}


\section{Introduction}
\label{sec:intro}

A knot is a smooth simple closed curve in the 3-sphere, considered up to smooth isotopy.  A link is a disjoint union of one or more knots.  Knots and links are conveniently represented as diagrams, using projection onto a 2-sphere.  A knot (or link) is said to be \emph{alternating} if it admits an alternating diagram, in which one alternates between overcrossings and undercrossings as one traverses the knot.  A great deal of geometric information may be gleaned from an alternating diagram: for example one may use it to easily determine the knot genus \cite{crowell,murasugi58}, whether the knot is prime or admits mutants \cite{menasco}, and whether it has unknotting number one \cite{mccoy}.

A knot is said to be  \emph{slice} if it is the boundary of a smoothly properly embedded disk in the 4-ball.  Slice knots were introduced by Artin in the 1920s and have been studied extensively since the late 1950s.  Applications include the definition of the knot concordance group \cite{FM} and proofs of existence of exotic smooth structures on $\rr^4$ \cite{ras}.

The goal of this work is to find a method to determine from an alternating diagram whether a knot is slice, and in the affirmative case, to explicitly exhibit such a slice disk.  We will describe some partial progress towards this goal.  In particular we will describe a combinatorial algorithm that searches for slice disks for alternating knots.  A knot is called \emph{ribbon} if it bounds a slice disk to which the radial distance function on the 4-ball restricts to give a Morse function with no minima.  This is equivalent to the existence of a sequence of \emph{band moves}, $n$ in number, which convert the knot to the unlink of $(n+1)$ components.
Our algorithm finds such a disk by following a sequence of band moves and isotopies which are compatible with certain factorisations of Goeritz matrices associated to the alternating diagram.  We call these disks and the knots which admit such disks \emph{algorithmically ribbon}; as the following indicates, these turn out to be quite common.

\begin{maintheorem}
\label{thm:ribbonalg}
All slice two-bridge knots are algorithmically ribbon, as are all connected sums $-K\#K$ of an alternating knot with its mirror reverse and all but one slice prime alternating knot of 12 or fewer crossings.  There are $662,903$ algorithmically ribbon prime alternating knots of 21 or fewer crossings.
\end{maintheorem}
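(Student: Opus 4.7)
The proof naturally splits into four parts: the slice two-bridge family, the family $K\#-K$, the enumeration through 12 crossings, and the census through 19 crossings. My plan is to attack them in that order, proving existence of suitable band sequences for the infinite families and then reducing the rest to a direct computation.

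For slice two-bridge knots, the plan is to combine Lisca's classification (in terms of continued fraction expansions of the form $[a_1,\dots,a_k,b,a_k,\dots,a_1]$ or their variants) with the fact that two-bridge knots have essentially one alternating diagram, whose Goeritz matrix is the tridiagonal matrix read off from the continued fraction. I would induct on the length of the continued fraction: the symmetry of the expansion supplies a canonical band move which cuts the knot into a two-bridge knot of shorter continued fraction, and simultaneously induces a block-diagonalization of the Goeritz form that is visibly one of the factorizations the algorithm searches for. Showing that this reduction survives under the algorithm's constraints at each inductive stage is the subtle point, since the algorithm only follows bands compatible with Goeritz-matrix factorizations, not arbitrary ribbon bands.

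For $K\#-K$, the plan is to use the standard symmetric ribbon disk: placing the alternating diagram $D$ of $K$ side-by-side with its mirror reverse $\overline{D}$, one has an obvious sequence of bands connecting corresponding arcs to unknot the composite. The Goeritz form of $K\#-K$ is a block sum $G\oplus(-G)$, and the change of basis $(x,y)\mapsto(x+y,x-y)$ gives a factorization compatible with the band sequence above. I would verify that each intermediate diagram produced by the band moves still has a Goeritz form admitting the analogous block-antidiagonal factorization, so that the algorithm is permitted to follow this sequence to completion.

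The two census statements are reduced to running the implemented algorithm on the prime alternating knot tables, e.g.\ from KnotInfo. For $\le 12$ crossings, one must enumerate all slice prime alternating knots, run the search, identify the single exception, and confirm (via an exhaustive search bound) that the algorithm really fails there rather than merely timing out. For $\le 19$ crossings, the $81{,}560$ count is an output of the same implementation, so what is really needed is a careful description of the search strategy and pruning heuristics, together with reproducibility data.

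The hardest step will be the two-bridge case. While the existence of ribbon disks is classical, showing that the \emph{algorithm} finds one requires verifying the Goeritz-factorization condition at every inductive step, not just at the top level; this may force a somewhat intricate bookkeeping of how the tridiagonal Goeritz matrix transforms under the reducing band move. The $K\#-K$ case is conceptually cleaner because the block-sum structure directly supplies the needed factorization, and the computational statements, while large, are in principle routine once the algorithm is implemented and tuned.
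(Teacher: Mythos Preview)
Your high-level decomposition into four parts matches the paper, and the computational parts (3) and (4) are indeed handled exactly as you say: by running the implemented algorithm over the alternating knot tables and reporting the output. However, both of your arguments for the infinite families contain genuine gaps.

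For $K\#-K$: the claim that the Goeritz form of the alternating diagram of $K\#-K$ is a block sum $G\oplus(-G)$ is false in the conventions of this paper. Both Goeritz lattices of an \emph{alternating} diagram are positive definite here (see \Cref{sec:moves}); the black lattice of the alternating diagram of $K\#-K$ is (up to a rank-one correction from the connected sum) $\Lambda_b(K)\oplus\Lambda_w(K)$, both summands positive definite. Consequently the change of basis $(x,y)\mapsto(x+y,x-y)$ does not produce the required finite-index embedding into a diagonal lattice, and your compatibility argument for the band sequence does not get off the ground. The paper instead exhibits the bifactorisation via \emph{local} contributions at each crossing (one orthonormal basis vector per crossing of $K$, shared between the two tangles $\Gamma$ and $\overline{\Gamma}$), and then gives a specific sequence of $n-1$ length-two algorithmic bands, each paired with an untongue move, which propagate through the tangle $\Gamma$ and reduce the diagram to one that simplifies to the crossingless unlink via R1 and R2 moves. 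The combinatorial work lies in showing every crossing of $\Gamma$ is reachable by such a band; this uses a zig-zag path argument in the chessboard-coloured diagram, not any block-matrix trick.

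For the two-bridge case: your inductive plan, in which each band ``cuts the knot into a two-bridge knot of shorter continued fraction'', is incompatible with how the algorithm operates. An algorithmic band move on an alternating knot produces a two-component link with one nonalternating crossing, \emph{not} a smaller alternating two-bridge knot, so there is no inductive return to the alternating two-bridge setting. The paper instead follows Lisca's classification into six explicit families $S(m^2,\cdot)$ and, for each family, exhibits a \emph{single} algorithmic band after which the resulting almost-alternating diagram of the two-component unlink is reduced to the crossingless diagram by a finite sequence of generalised Tsukamoto moves (flypes, untongues, R1, R2). The verification is case-by-case rather than inductive, and the nontrivial content is checking that the Tsukamoto-type simplifications actually terminate at the crossingless diagram for each family.
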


An example of an algorithmic ribbon disk for the stevedore knot ($6_1$ in the Rolfsen table \cite{rolfsen}) is shown in \Cref{fig:stevedoredisk}.  The figure shows a band move and a sequence of isotopies resulting in a 2-component unlink.  The decorations on the figure will be explained later.

\begin{figure}[htbp]
\centering
\includegraphics[scale=0.444444]{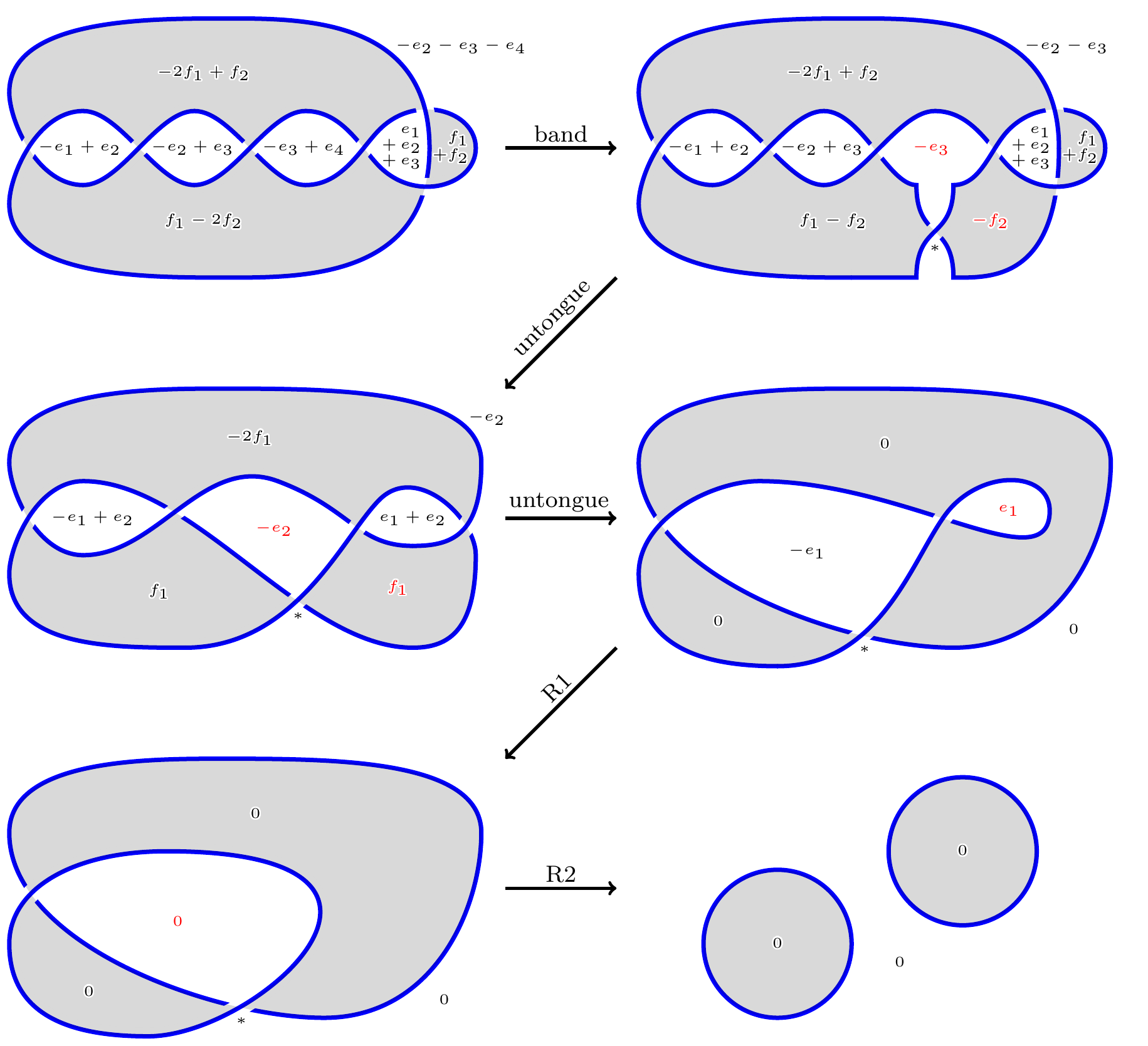}
\begin{narrow}{0.3in}{0.3in}
\caption
{{\bf The stevedore knot is algorithmically ribbon.} Crossings which make each diagram nonalternating are marked with an asterisk.}
\label{fig:stevedoredisk}
\end{narrow}
\end{figure}

The algorithm is based on a slice obstruction for links coming from Donaldson's diagonalisation theorem \cite{thmA}, which applies to links admitting certain special diagrams.  This is well-known for alternating knots and has been used to great success by Lisca in particular \cite{lisca1,lisca2}.  Recall that the nullity $\eta(L)$ of a link is equal to the first Betti number of its double branched cover, or equivalently to the nullity of the Goeritz matrix of a connected diagram of the link.  Nonsplit alternating links have nullity zero.  In general the nullity of a link is a lower bound for the number of nonalternating crossings in a connected diagram (\Cref{lem:semidef}).
We consider the larger class of \emph{minimally nonalternating links}, where a link $L$ is said to be minimally nonalternating if it admits a connected diagram with $\eta(L)$ nonalternating crossings.  A ribbon disk for an alternating knot $K$ is described by a sequence of band moves and isotopies from $K$ to an unlink.  Each of the band moves increases nullity by one, and both $K$ and the unlink are minimally nonalternating.  Our algorithm looks for such a sequence in which each intermediate diagram is minimally nonalternating and also \emph{bifactorizable}.  We say that a link diagram is bifactorizable if both of its Goeritz matrices admit a certain factorisation described in \Cref{sec:moves}.  bifactorizable diagrams are always minimally nonalternating.  Moreover, for minimally nonalternating diagrams, bifactorizability is equivalent to sliceness of the link being unobstructed by Donaldson's diagonalisation theorem (\Cref{prop:bif}).  Each bifactorizable diagram admits a finite set of band moves which preserve bifactorizability (\Cref{prop:algband}).  One of the key steps in our algorithm is to identify and test such band moves.  The other key step in the algorithm is to apply a sequence of simplifying isotopies to the diagram.  These isotopies, called \emph{generalised Tsukamoto moves}, also preserve bifactorizability.

The algorithm may also be applied to alternating links of more than one component, or more generally to minimally nonalternating links, to search for ribbon surfaces with prescribed Euler characteristic (\Cref{rem:algMNA}).

In \Cref{sec:moves} we describe the band moves and isotopies in more detail, and then in \Cref{sec:theory} we establish the underlying link sliceness obstruction and provide some further background information.  \Cref{sec:examples} contains proofs that all slice 2-bridge knots and all  knots of the form $-K\#K$, for alternating $K$, are algorithmically ribbon, as well as a conjecture which says that alternating knots which admit ribbon disks with a single saddle point are in fact algorithmically ribbon. \Cref{sec:imp} describes the implementation of the algorithm, which is built into the software package KLO of the second author.  This software, together with search results and instructions for using the algorithm on examples, is available from \url{www.klo-software.net/ribbondisks}.  \Cref{sec:results} describes the results of the algorithm when applied to alternating knots of small crossing number.  \Cref{sec:escapees} describes a search for \emph{escapees}, which are ribbon alternating knots which are not algorithmically ribbon, though they really want to be; 
combining this with obstructions due to Fox-Milnor and Herald-Kirk-Livingston, the result is a resolution of the question of sliceness for all but 3,276 of the over 1.2 billion prime alternating knots of at most 21 crossings, including all such knots of crossing number at most 15.

\vskip2mm
\noindent{\bf Acknowledgements.}  
The first author has benefitted from helpful conversations on this topic with many colleagues over the years, including Josh Greene, Paolo Lisca, and Sa\v{s}o Strle.  Particular thanks are due to Paolo Lisca and Liam Watson for discussions on minimally nonalternating links, and to Tetsuya Tsukamoto who showed us the untongue-2 move.  We thank Cornelia Van Cott for a helpful question on an earlier version of the paper, and we thank the anonymous referees for many helpful comments to improve the exposition.


\section{Algorithmic band moves and isotopies}
\label{sec:moves}

In this section we describe some special embedded surfaces in the 4-ball called \emph{algorithmic ribbon surfaces}, which will be realised as sequences of moves applied to link diagrams.

Following Tait \cite{tait} and Goeritz \cite{goeritz}, we make use of chessboard colourings of knot and link diagrams.  As usual, a link diagram is a 4-valent graph embedded in $S^2$ with over- and under-crossings indicated at the vertices, which are then referred to as crossings.  A chessboard colouring of a link diagram is a colouring of the regions in the complement of the graph using two colours, white and black, with the regions on either side of any edge having different colours.  A  chessboard-coloured diagram of $L$ gives rise to a spanning surface $F_b$, called the black surface: this is obtained by gluing the black regions of the diagram, with a half-twisted band at each crossing.
For alternating diagrams we follow the colouring convention shown in \Cref{fig:col1}.  In a general diagram we refer to crossings coloured as in \Cref{fig:col1} as \emph{alternating} crossings, and to crossings coloured as in \Cref{fig:col2} as \emph{nonalternating} crossings, which we label with asterisks.  We define an alternating diagram to be one that admits a chessboard colouring without nonalternating crossings; Greene and Howie have recently shown that the resulting black and white surfaces give rise to a topological characterisation of nonsplit alternating links \cite{greene,howie}.

\begin{figure}[htbp]
\centering
\begin{subfigure}{2in}
\centering
\includegraphics[scale=1]{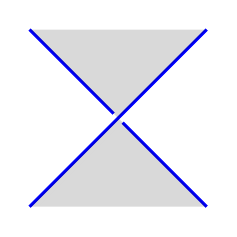}
\vspace{-0.2in}
\caption{}
\label{fig:col1}
\end{subfigure}
\hspace{0.2\textwidth}
\begin{subfigure}{2in}
\centering
\includegraphics[scale=1]{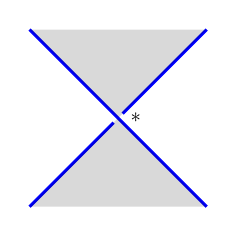}
\vspace{-0.2in}
\caption{}
\label{fig:col2}
\end{subfigure}
\begin{narrow}{0.3in}{0.3in}
\caption
{{\bf Colouring conventions.} An alternating and a nonalternating crossing.}
\label{fig:crossings}
\end{narrow}
\end{figure}

A  chessboard-coloured link diagram gives rise to two Goeritz matrices as we now recall, cf. \cite{goeritz,GL}.  Choose a labelling $R_0,\dots,R_m$ of the white regions of the diagram.  To each crossing $c$ in the diagram we associate a number $\epsilon(c)$ which is $+1$ if $c$ is an alternating crossing and $-1$ if $c$ is a nonalternating crossing.  We form a matrix $\widehat{G}$ with $m+1$ rows and columns whose entries are as follows:
$$g_{ij}=
\begin{cases}
-\sum\epsilon(c), \mbox{ summed over crossings incident to $R_i$ and $R_j$,} & \mbox{ if $i\ne j$}\\
-\sum_{k\ne i}g_{ik}, & \mbox{ if $i=j$}.
\end{cases}$$
The \emph{black Goeritz matrix} $G_b$ of the diagram is then defined to be the matrix obtained by omitting the $0$th row and $0$th column of $\widehat{G}$.

A \emph{spanning surface} for a link $L$ in $S^3$ is an embedded surface $F$, without closed components, bounding $L$ in $S^3$.  An example is given by the black surface of a chessboard-coloured diagram.  The Gordon-Litherland pairing on $H_1(F;\zz)$ is given by
$$\lambda([a],[b])=\lk(a,\tau b),$$
where the linking number is taken between $a$ and the double normal push-off $\tau b$ of $b$.  This agrees with the symmetrised Seifert pairing if $F$ happens to be orientable.  We have the following result of Gordon and Litherland \cite{GL}.

\begin{theorem}
\label{thm:GL}
The lattice $(H_1(F;\zz),\lambda)$ is isomorphic to the intersection lattice of the 4-manifold $X_F$ given as the double cover of $D^4$ branched along a copy of the surface $F$ with its interior pushed inside the 4-ball.
\end{theorem}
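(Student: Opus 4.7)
The plan is to build an explicit isomorphism $\phi\colon H_1(F;\zz) \to H_2(X_F;\zz)$ and then check that it intertwines $\lambda$ with the intersection pairing on $X_F$.

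First, isotope $F$ rel boundary into $\mathrm{int}(D^4)$ and form the branched double cover $\pi\colon X_F \to D^4$. Given a 1-cycle $a$ on $F$, choose a 2-chain $C_a \subset D^4$ with $\partial C_a = a$ and $C_a \cap F = a$ transversally; such a $C_a$ exists because $H_1(D^4) = 0$. The key observation is that $\pi^{-1}(C_a)$ is a 2-cycle in $X_F$: away from $F$ it is an honest double cover of $C_a \setminus a$, while over $a \subset F$ the two sheets glue along a single lift of $a$ (the branch locus is covered one-to-one), so no boundary remains. Set $\phi([a]) = [\pi^{-1}(C_a)]$; well-definedness follows by applying the same preimage construction to a 3-chain in $D^4$ cobounding two choices of $C_a$.

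Next, I would verify that $\phi$ is an isomorphism. Decompose $X_F = \widetilde{N} \cup \widetilde{V}$, where $N$ is a tubular neighborhood of $F$ in $D^4$, $V = D^4 \setminus \mathrm{int}(N)$, and the tildes denote preimages under $\pi$. Then $\widetilde{N}$ deformation retracts onto $F$, while $\widetilde{V} \to V$ is the unbranched double cover classified by the mod-$2$ class dual to $F$. A Mayer--Vietoris computation using $H_*(D^4) = H_*(\mathrm{pt})$ collapses to yield $H_2(X_F;\zz) \cong H_1(F;\zz)$, and one checks that $\phi$ realises this isomorphism.

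Finally, and this is where I expect the main technical difficulty, one computes $[\pi^{-1}(C_a)] \cdot [\pi^{-1}(C_b)]$ and matches it with $\lk(a, \tau b)$. The plan is to perturb $C_b$ to a generic 2-chain $C_b'$ which agrees with $C_b$ near its boundary but meets $C_a$ transversally in $\mathrm{int}(D^4)$; the upstairs intersection then splits into a bulk contribution of $2(C_a \cdot C_b')$ and a boundary contribution along $a \cup b$ coming from how the two sheets open out along the branch locus. Aligning $C_b'$ with a normal pushoff of $b$ off $F$ and analysing the local model near the branch set identifies the boundary contribution with $\lk(a, \tau b)$ in $S^3$. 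The main obstacle is the case of non-orientable $F$: a single-sided, Seifert-style pushoff is not globally defined, and it is precisely this failure that forces the use of the \emph{double} normal pushoff $\tau$ in the definition of $\lambda$. Happily, the branched cover construction is inherently two-sided and produces exactly this quantity; verifying the matching on representatives of a basis of $H_1(F;\zz)$ and extending by bilinearity then finishes the proof.
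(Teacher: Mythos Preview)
The paper does not prove this statement at all: it is stated as a result of Gordon and Litherland and simply cited as \cite{GL}, with no proof given. There is therefore nothing in the paper to compare your argument against.

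That said, your outline is broadly in the spirit of the original Gordon--Litherland argument: build classes in $H_2(X_F)$ by lifting bounding 2-chains through the branched cover, identify $H_2(X_F)\cong H_1(F)$ via Mayer--Vietoris on a tubular-neighbourhood decomposition, and then compute intersections by a local analysis near the branch set. Two points deserve care if you want to turn this into an actual proof. First, your intersection computation is still schematic: the split into a ``bulk'' contribution $2(C_a\cdot C_b')$ and a ``boundary'' contribution is not quite right as stated, since for the \emph{self}-pairing one cannot keep $C_a$ and its perturbation disjoint from each other near $a$, and in general the bulk term vanishes once you arrange $C_a$ and $C_b'$ to be disjoint away from $F$; the entire answer comes from the local model near the branch locus, where one sees that the lifted surfaces meet in points counted by $\lk(a,\tau b)$. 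Second, your well-definedness argument needs the 3-chain cobounding two choices of $C_a$ to be made transverse to $F$ so that its preimage is a genuine 3-chain upstairs; this is routine but should be said. With those caveats, the strategy is sound and is essentially how the result is proved in the literature.
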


In general a lattice $\Lambda$ is a pair consisting of a finitely generated free abelian group together with a symmetric integer-valued bilinear pairing.  The rank of the lattice is the rank of the underlying group.

A basis for the first homology of a connected $F_b$ is given by curves, each of which winds once around one of the white regions $R_1$,\dots,$R_m$ as above.  When $F_b$ is disconnected, one or more of the white regions $R_i$ may have more than one boundary component, in which case the corresponding basis element is a multicurve in $F_b$ with one curve for each boundary component of $R_i$, oriented as the boundary of $R_i$.  The black Goeritz matrix $G_b$ is the matrix of the Gordon-Litherland form with respect to this basis.  The lattice $\Lambda_b:=(H_1(F_b;\zz),\lambda)$ is called the black Goeritz lattice, or simply the black lattice, of the diagram.

We define the white Goeritz matrix $G_w$ and the white lattice $\Lambda_w$ of the diagram to be equal to the black Goeritz matrix and lattice of the mirror diagram, obtained by reversing every crossing in $D$, with black and white switched.  By \Cref{thm:GL}, $\Lambda_b$ is the intersection lattice of the double cover of the 4-ball branched along the pushed-in black surface of $D$.  Similarly $\Lambda_w$ is the intersection lattice of the double cover of the 4-ball branched along the pushed-in white surface of $D$, with orientation reversed.  Note that we have chosen our conventions so that both Goeritz lattices of an alternating diagram are positive definite.

Recall that a $\mu$-component link is a smooth embedding of the disjoint union of $\mu$ circles into $S^3$.  The \emph{diagram components} of a diagram of a link are the connected components in $S^2$ of the given projection; thus the number of diagram components is less than or equal to the number of components of the link.

Given a lattice $\Lambda=(H,\lambda)$ we let $\Lambda^0$ denote its nullspace; this is the subspace consisting of elements $a\in H$ with $\lambda(a,b)=0$ for all $b\in H$.
For a general chessboard-coloured diagram, the Goeritz lattices may have nontrivial nullspaces.  Quotienting each Goeritz lattice by its nullspace gives rise to a lattice with a nondegenerate pairing.  Let $D$ be a chessboard-coloured diagram with $m+1$ white regions and $n+1$ black regions.  We say that $D$ is \emph{bifactorizable} if each of these nondegenerate Goeritz quotients embeds as a finite-index sublattice of a standard diagonal lattice.  That is to say, $D$ is bifactorizable if there exist lattice embeddings
\begin{equation}
\label{eq:latemb}
\begin{aligned}
\Lambda_b/\Lambda_b^0&\rightarrow\zz^{m'},\quad\mbox{and}\\
\Lambda_w/\Lambda_w^0&\rightarrow\zz^{n'},
\end{aligned}
\end{equation}
where $m'=\rk(\Lambda_b/\Lambda_b^0)=m-\rk\Lambda^0_b$ and  $n'=\rk(\Lambda_w/\Lambda_w^0)=n-\rk\Lambda^0_w$.   
We will see in \Cref{lem:latdim} that  these ranks can be read off from the diagram as follows:

\begin{equation*}
\label{eq:bifacranks}
\begin{aligned}
m'&=m-k-l_w+1,\quad\mbox{and}\\
n'&=n-k-l_b +1,
\end{aligned}
\end{equation*}
where $k$ is the number of nonalternating crossings in $D$ and $l_b$ (respectively, $l_w$) is the number of components of the black (resp., white) surface.
In particular, for a connected alternating bifactorizable diagram, $m=m'$ and $n=n'$.

Such embeddings as in \eqref{eq:latemb} are equivalent to a pair of lattice morphisms
\begin{equation}
\label{eq:latmorph}
\begin{aligned}
\Lambda_b&\rightarrow\zz^{m'},\quad\mbox{and}\\
\Lambda_w&\rightarrow\zz^{n'},
\end{aligned}
\end{equation}
with finite-index images.

Choosing a basis of regions as above for each of the Goeritz lattices, and an orthonormal basis for each standard diagonal lattice, we see that bifactorizability is equivalent to the existence of 
matrices $A\in M(m'\times m,\zz)$ and $B\in M(n'\times n,\zz)$ satisfying
\begin{equation*}
\begin{aligned}
G_b&=A^TA,\\
G_w&=B^TB,
\end{aligned}
\label{eqn:bif}
\end{equation*}
the existence of which can be determined algorithmically, see for example \cite{plesken}.
We represent a bifactorization on a diagram as follows.   In each white region, representing a generator of $\Lambda_b$,  we write the corresponding element of $\zz^{m'}$, in terms of the standard basis $e_1,\dots,e_{m'}$; the coefficients are the entries of the corresponding column of $A$.  We similarly write an element of $\zz^{n'}$, in terms of the basis $f_1,\dots,f_{n'}$, in each black region, using the entries of $B$; for an example, see \Cref{fig:stevedoredisk}.  Note that the lattice morphism is easily extended to the region $R_0$, which was excluded when calculating the Goeritz matrix, by noting that the lattice elements corresponding to the white regions $R_0,\dots,R_m$ sum to zero.

A \emph{band move} on a link $L$ in $S^3$ is represented by a smoothly embedded rectangle $\beta$ in $S^3$ with opposite ends attached to $L$ and the rest of the rectangle embedded in the complement of $L$.  The result of the band move is the link $L'$ which is obtained by removing the ends of the rectangle $\beta$ from $L$ and adding the other two sides, followed by smoothing.
Suppose that we are given a diagram $D$ for a link $L$ and a band move, in the form of a rectangle $\beta$, which we wish to apply to $L$.  We may assume that $\beta$ is put in \emph{standard position} on $D$, by which we mean that it is represented by an arc with endpoints on the diagram,  intersecting the diagram transversely in a number of crossings, and with no self-crossings, together with an integer on each component of the intersection of the arc with the complement of the diagram.  The arc is the core of the band.  The integers count the number of half-twists relative to the blackboard framing, and their sum is called the twist number of the band.  Every band may be put in standard position as the reader may verify; in particular, self-crossings may be successively replaced by pairs of crossings between the core of the band and $D$ by isotopy.  The length of a band $\beta$ in standard position is the number of regions it crosses, counted with multiplicity, or equivalently one fewer than the number of intersections between the core and $D$.  The first move in \Cref{fig:stevedoredisk} is a band move whose band has length and twist number equal to one.  The band shown on the left in \Cref{fig:algbands} has length one and twist number $-1$, and that on the right has length two and twist number zero.

\begin{figure}[htbp]
\centering
\includegraphics{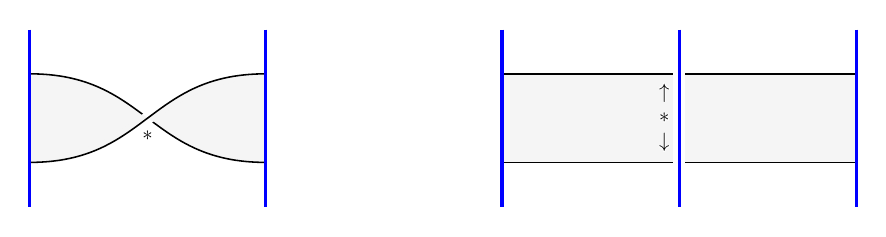}
\begin{narrow}{0.3in}{0.3in}
\caption
{{\bf Candidate algorithmic bands.} For the length-one band, the twist is chosen such that the new crossing is non-alternating; for the length-two band, one of the two crossings will be non-alternating, depending on the chessboard colouring of the diagram and whether the band crosses over or under.}
\label{fig:algbands}
\end{narrow}
\end{figure}

The following definitions describe some special band moves and isotopies which we will search for in order to find ribbon disks for (some but not all) slice alternating knots.

\begin{defn}
\label{def:algband}
Let $D$ be a bifactorizable diagram.  An \emph{algorithmic band} for $D$ is a band $\beta$ which is either 
\begin{enumerate}
\item of length 2 with twist number 0, or
\item of length 1 with twist number $\pm1$, with sign chosen so that the crossing introduced is nonalternating,
\end{enumerate}
and for which the diagram obtained from $D$ by applying the band move determined by $\beta$ is bifactorizable.
\end{defn}

Candidate algorithmic bands are shown in \Cref{fig:algbands}.  It is important to note that there are finitely many such bands in any diagram, and that most such bands in a given bifactorizable diagram will not satisfy the bifactorizability condition.

In this paper we take a tangle diagram to mean part of a diagram which is contained in a disk.  A tangle diagram replacement is then a modification of a diagram preserving the part of the diagram outside that disk.

\begin{definition}
\label{def:gTsuk}
A {\em generalised Tsukamoto move} is a tangle diagram replacement $\Gamma_1\rightarrow\Gamma_2$ satisfying the following properties:
\begin{enumerate}
\item isotopy: $\Gamma_2$ is isotopic to $\Gamma_1$ relative to boundary;
\item crossings: $\Gamma_2$ has at most as many crossings as $\Gamma_1$, and the same sum of number of components plus number of nonalternating crossings as $\Gamma_1$;
\item bifactorizability is preserved: any bifactorization of a diagram containing $\Gamma_1$ gives rise to one for the diagram with $\Gamma_1$ replaced by $\Gamma_2$, and vice versa.
\end{enumerate}
\end{definition}

Figures \ref{fig:flype} through \ref{fig:tripleflype} contain examples of generalised Tsukamoto moves.  In each case one may obtain further moves by reflecting the diagram and switching black and white.  
Other versions of the 3-flype move in Figure \ref{fig:tripleflype} may be obtained by replacing the diagram on the right as follows: first simplify the diagram on the left in the obvious way by undoing the half twist in three strands on either side of the tangle, and  turning the tangle over.  Then place a ``floating loop", as shown in Figure \ref{fig:loopthreeflype}, around the tangle, either entirely over or entirely under the rest of the diagram, and resolve one of the three resulting nonalternating crossings.  One can then obtain another version from each of these by reflecting the diagram and switching black and white.
We sometimes refer to generalised Tsukamoto moves as AA-simplifications, where AA stands for ``almost alternating".  We are grateful to Tatsuya Tsukamoto who suggested the move shown in \Cref{fig:untongue2}.  The moves in Figures \ref{fig:flype}, \ref{fig:AAreidemeister}, and \ref{fig:untongue} were shown by Tsukamoto in \cite{tsuk2} to be sufficient to simplify any almost-alternating diagram of the two-component unlink (see \Cref{thm:tsuk} below).

We briefly justify the preservation of bifactorizability in the case of one colour for the flype move in Figure \ref{fig:flype}; the arguments are similar for the other colour and the other moves.  

\begin{lemma}
\label{lem:flype}
If the labels on the black regions of the left hand diagram of Figure \ref{fig:flype} extend to an embedding in $\zz^n$ of the white lattice of a diagram, then so do those in the right hand diagram.
\end{lemma}
\begin{proof}
Let $R_i$ denote the black regions labelled by vectors $w_i$ for $i\in\{1,2,3\}$ in the left hand diagram of the figure.
We assume that the regions $R_1$ and $R_2$ are distinct (if not the diagram is to be interpreted as associating the vector $w_1+w_2$ to the common region). First of all, any black region $R'$ not shown in the diagram has some vector $w'$ associated to it that remains unchanged for the diagram on the right.  Since $R'$ does not share any crossings with $R_2$ or any region in the tangle, it follows that 
\begin{align*}
w'\cdot w_1&=w'\cdot (w_1+w_2+\sum w),\\
w'\cdot w_3&=w'\cdot (w_2+w_3),\\
0&=w'\cdot (-w_2-\sum w),
\end{align*}
where $\sum w$ denotes the sum of vectors associated to all black regions in the tangle box.  We see that this correctly matches the Goeritz lattice pairings for the region $R'$.

Similarly for any black region $R$ in the tangle box on the left hand diagram, we let $w$ be the vector associated to it by the Goeritz lattice embedding.  Recalling that the sum of all black regions is zero in the Goeritz lattice, and noting that the only black regions which may share crossings with $R$ are the other regions in the tangle box together with $R_1$ and $R_2$, we conclude that 
$$w\cdot w_3=w\cdot (w_1+w_2+\sum w)=0.$$

From this we see that $w\cdot w_1=w\cdot(-w_2-\sum w)$ and $w\cdot w_2=w\cdot(w_2+w_3)$.  Since the number of crossings between $R$ and any other black region in the tangle box is unchanged by the flype, we conclude that the labels on the right hand diagram give the correct lattice pairings for the region $R$.

Finally we note that the region $R_2$ has no crossings with any region outside of the portion of the diagram shown, from which it follows that
$$w_2\cdot (w_1+w_2+w_3+\sum w)=0$$ and
hence
$$1=w_2\cdot w_3=-w_2\cdot(w_1+w_2+\sum w)=(-w_2-\sum w)\cdot(w_1+w_2+\sum w).$$

We conclude that the vector labels on the right hand diagram represent an embedding of its Goeritz lattice as required.
\end{proof}

We recall that a surface $F$ in the 4-ball may be perturbed so that the radial distance function $\rho$ restricts to give a Morse function on $F$.  This gives rise to an embedded handle decomposition or movie presentation of $F$, in which minima appear as split disjoint unknots, maxima manifest as disappearing split unknots, and saddles appear as band moves.  An isotopy of the link may be interpreted as an embedded cylinder between two levels of $\rho$.

\begin{defn} 
\label{def:algsurf}
Let $L$ be a nonsplit alternating link.  An \emph{algorithmic ribbon surface} for $L$ is a surface embedded in the 4-ball which is represented by a sequence of algorithmic band moves and generalised Tsukamoto moves, starting with a nonsplit alternating diagram of $L$ and ending with a crossingless unlink diagram.
If such a surface exists, we say that $L$ is \emph{algorithmically ribbon}.
\end{defn}

We note that algorithmic ribbon surfaces have Euler characteristic one: each algorithmic band move introduces one nonalternating crossing and preserves the number of diagram components, and each generalised Tsukamoto move preserves the sum of the number of nonalternating crossings plus the number of diagram components.  It follows that the number of components of the unlink at the end of the sequence in \Cref{def:algsurf} is equal to one plus the number of band moves in the sequence, so that the surface has Euler characteristic one.  If the starting point of the sequence is a knot diagram, then the sequence represents a disk, which we call an algorithmic ribbon disk.  Note also that given a sequence of candidate algorithmic bands and generalised Tsukamoto moves, starting with a connected alternating diagram $D$ and ending with a crossingless unlink diagram, one can work backwards from the crossingless diagram to uniquely obtain a bifactorization of each diagram in the sequence, so that the sequence in fact represents an algorithmic ribbon surface.  One could thus search for such surfaces without checking the bifactorizability condition, but in practice such a search would be very slow due to the large number of candidate algorithmic bands to consider.

For nonsplit alternating links with more than one component, algorithmically ribbon does not imply slice; rather it implies that the link bounds a surface of Euler characteristic one, not necessarily orientable, with no closed components.  Such links were called $\chi$-slice in \cite{lconc}.

\begin{remark}
\label{rem:algMNA}
The algorithm may be applied more generally to \emph{minimally nonalternating links}, as defined in the next section.  An algorithmic surface for a minimally nonalternating link $L$ of nullity $\eta$ is a ribbon surface of Euler characteristic $\eta+1$.  If the number of components of $L$ is equal to $\eta+1$, then an algorithmic surface for $L$ consists of $\eta+1$ disjoint disks.
\end{remark}

\begin{remark}
\label{rem:orble?}
Note that an algorithmic ribbon surface does not have to be orientable in general, and thus in particular algorithmic bands do not respect link orientation.  In the case of an alternating knot, however, an algorithmic surface is in fact a disk, and so \emph{a posteriori} we deduce that all of its algorithmic bands do in fact respect an orientation of the original knot.   More generally one may observe, for example using Turaev's theorem relating spin structures on the double branched cover of a link and orientations \cite{turaev}, that the nullity of a link is bounded above by the number of components minus one.  It follows from this, or from the previous remark, that if a minimally nonalternating link has nullity equal to the number of its components minus one, then each successive algorithmic band must in fact increase the number of components, and is thus compatible with any orientation of the link.
\end{remark}

\begin{figure}[htbp]
\centering
\includegraphics[scale=0.6]{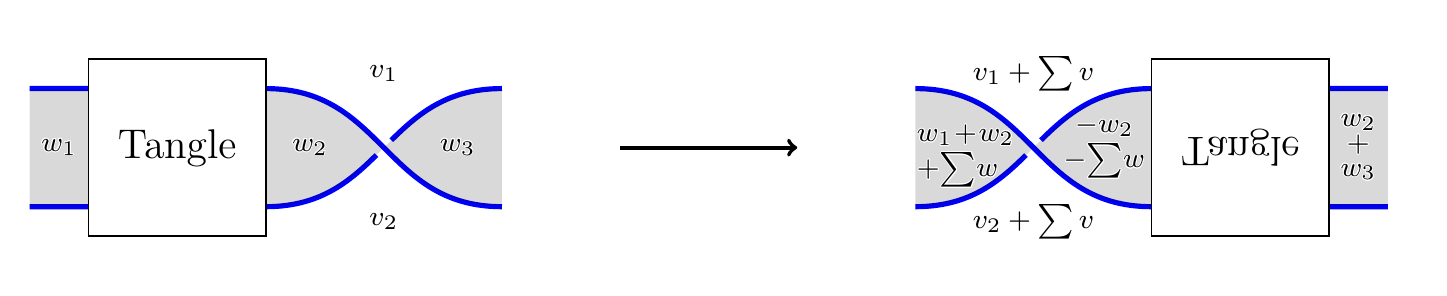}
\begin{narrow}{0.3in}{0.3in}
\caption
{{\bf Flype.} Here $\sum v$ (resp., $\sum w$) is a sum over all the white (resp., black) regions in the tangle.  Vectors associated to white regions in the tangle are multiplied by $-1$ in this move; vectors associated to black regions in the tangle are unchanged.  Note also that a special case of the flype is to move a connected summand past a crossing.}
\label{fig:flype}
\end{narrow}
\end{figure}

\begin{figure}[htbp]
\centering
\includegraphics[scale=0.75]{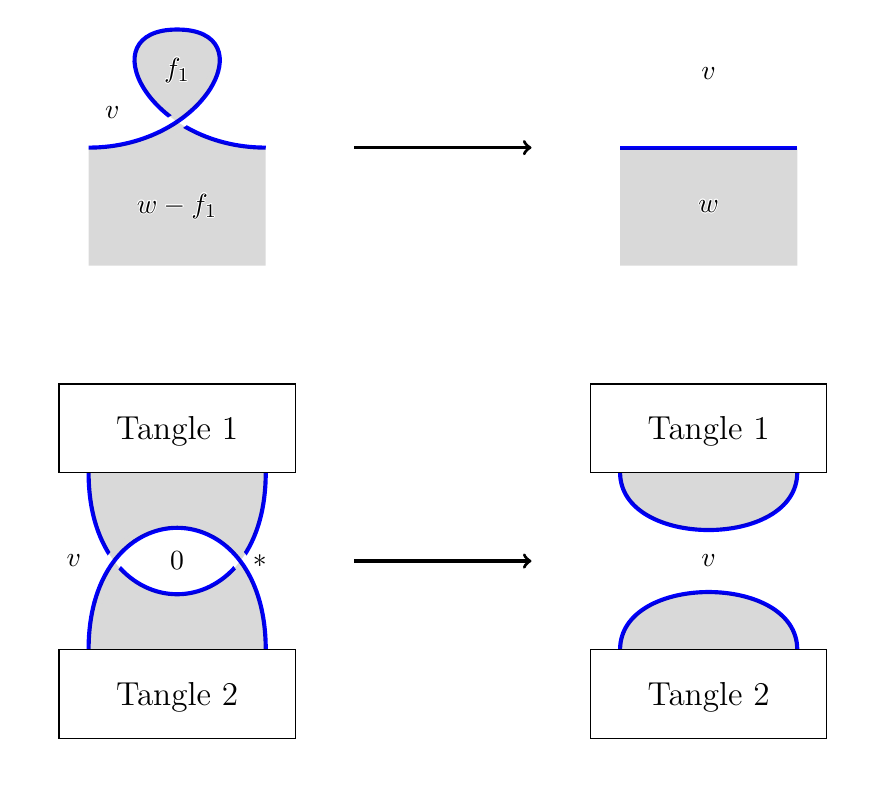}
\begin{narrow}{0.3in}{0.3in}
\caption
{{\bf Reidemeister moves preserving bifactorizability.} Note that both moves preserve the sum of number of diagram components plus number of nonalternating crossings. Note also that Reidemeister 3 moves change the number of nonalternating crossings and do not preserve bifactorizability.}
\label{fig:AAreidemeister}
\end{narrow}
\end{figure}

\begin{figure}[htbp]
\centering
\includegraphics[scale=0.6]{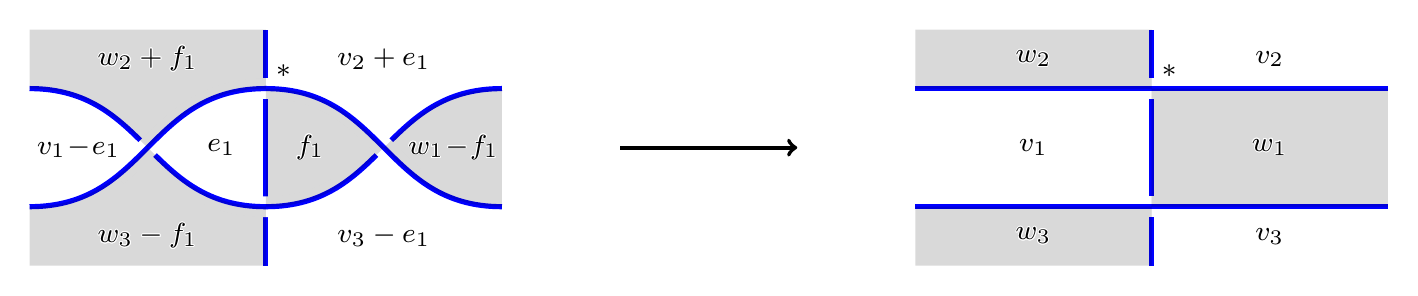}
\begin{narrow}{0.3in}{0.3in}
\caption
{{\bf Untongue move.} }
\label{fig:untongue}
\end{narrow}
\end{figure}

\begin{figure}[htbp]
\centering
\includegraphics[scale=0.6]{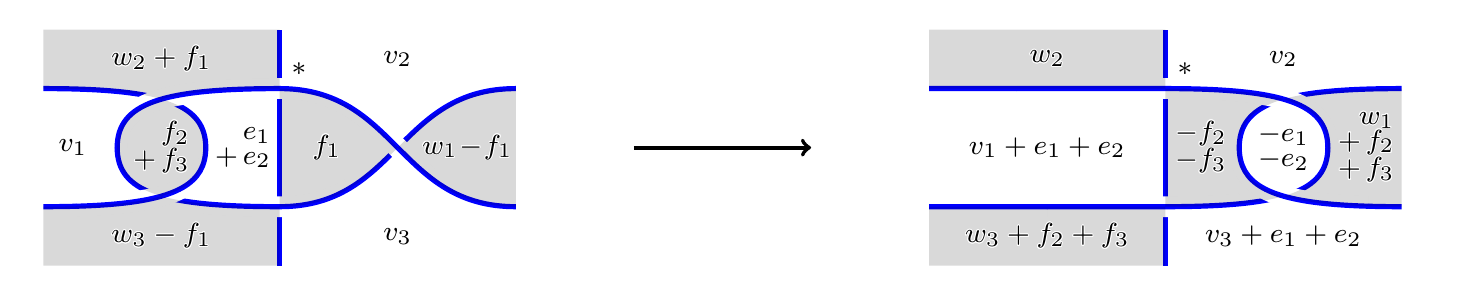}
\begin{narrow}{0.3in}{0.3in}
\caption
{{\bf Almost-alternating clasp move.} }
\label{fig:clasp}
\end{narrow}
\end{figure}

\begin{figure}[htbp]
\centering
\includegraphics[scale=0.6]{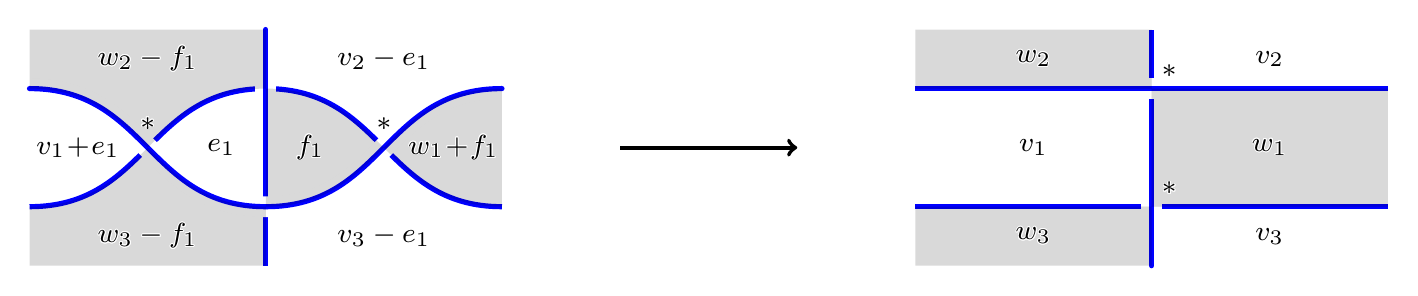}
\begin{narrow}{0.3in}{0.3in}
\caption
{{\bf Untongue-2 move.} }
\label{fig:untongue2}
\end{narrow}
\end{figure}

\begin{figure}[htbp]
\centering
\includegraphics[scale=0.5]{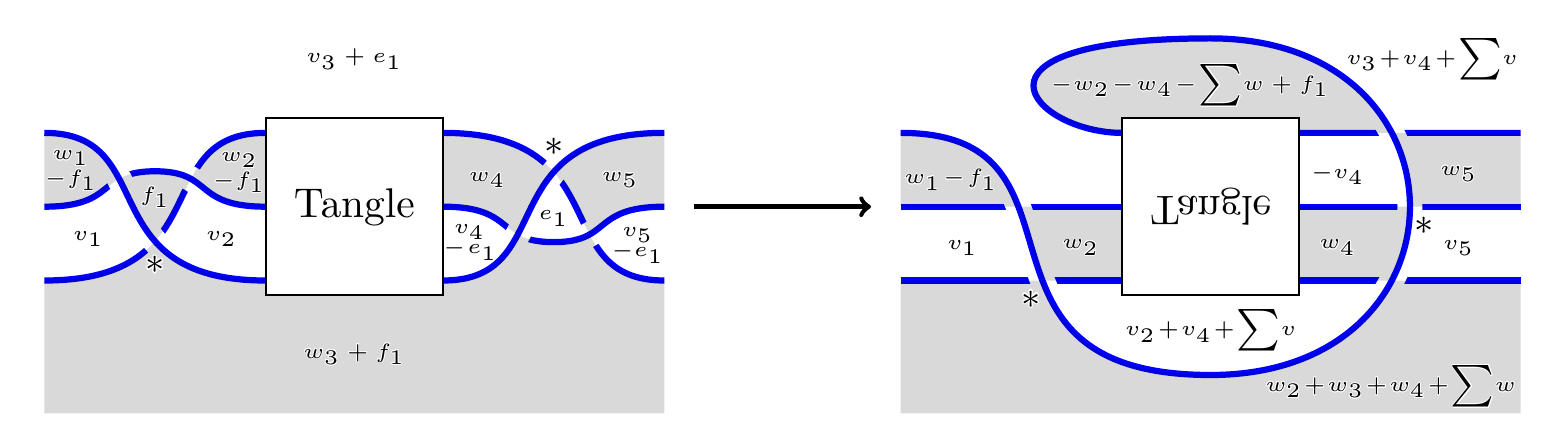}
\begin{narrow}{0.3in}{0.3in}
\caption
{{\bf The 3-flype move.} Here $\sum v$ (resp., $\sum w$) is a sum over all the white (resp., black) regions in the tangle.    Vectors associated to white regions in the tangle are multiplied by $-1$ in this version of the move; black regions in the tangle are unchanged.}
\label{fig:tripleflype}
\end{narrow}
\end{figure}

\clearpage


\section{Obstructions and conjectures}
\label{sec:theory}

Recall that the nullity $\eta(L)$ of a link in $S^3$ is the rank of the nullspace of a Goeritz matrix of a connected diagram of $L$.  Equivalently, it is the first Betti number of the double cover of $S^3$ branched along $L$, noting that the Goeritz matrix is the intersection form of a simply-connected 4-manifold bounded by the double branched cover.  
The following lemma gives a generalisation of the fact that, with our conventions from \Cref{sec:moves}, both Goeritz matrices of a nonsplit alternating diagram are positive definite.

\begin{lemma}
\label{lem:semidef}
Let $L$ be a link in $S^3$, and let $D$ be a chessboard-coloured diagram of $L$ with $l$ diagram components and $k$ nonalternating crossings.  Then the nullity of $L$ is bounded above by $k+l-1$,
\begin{equation}
\label{eq:etakl}
\eta(L)\le k+l-1.
\end{equation}
Moreover, equality is attained in \eqref{eq:etakl} if and only if both Goeritz matrices of $D$ are positive semi-definite.  In particular, if $D$ is bifactorizable then equality is attained in \eqref{eq:etakl}.
\end{lemma}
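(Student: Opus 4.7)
The plan is to prove the inequality and the equality condition simultaneously by deriving the single clean identity
\[
\eta(L) + n_-(G_b) + n_-(G_w) = k + l - 1,
\]
where $n_-(G)$ denotes the number of negative eigenvalues of $G$ counted with multiplicity. This immediately yields both the bound $\eta(L) \leq k+l-1$ (since $n_- \geq 0$) and the equality characterization: equality holds iff $n_-(G_b) = n_-(G_w) = 0$, i.e., iff both Goeritz matrices are positive semi-definite.

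The identity will follow from combining three independent facts. The first is a \emph{signature identity} $\sigma(G_b) + \sigma(G_w) = c - 2k$ (where $c$ is the total number of crossings), which I plan to derive from the Gordon-Litherland signature formula $\sigma(L) = \sigma(G_b) - \tfrac{1}{2}e(F_b)$ applied to both $F_b$ and the corresponding formula in the mirror diagram (recalling that $G_w$ is defined as the black Goeritz matrix of the mirror), together with the observation that the normal Euler numbers satisfy $e(F_b) - e(F_w) = 2\sum_c \epsilon(c) = 2(c-2k)$ because each crossing contributes oppositely-signed half-twists to $F_b$ and $F_w$. The second is the \emph{nullity identity} $\eta(L) = \text{nullity}(G_b) + l_b - 1 = \text{nullity}(G_w) + l_w - 1$, obtained from the long exact sequence of the pair $(X_{F_b}, \partial X_{F_b} = \Sigma_2(L))$, where $X_{F_b}$ is the branched double cover from \Cref{thm:GL}; the key rational-homology inputs are $H_2(X_{F_b}; \mathbb{Q}) \cong H_1(F_b; \mathbb{Q})$ with intersection form $G_b$, and $H_1(X_{F_b}; \mathbb{Q}) \cong \mathbb{Q}^{l_b-1}$ (which can be verified by a Wang/Mayer-Vietoris computation). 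The third is the \emph{combinatorial identity} $l_b + l_w = l + 1$, which I will establish via an Euler characteristic argument on the chessboard structure (comparing $\chi(F_b) + \chi(F_w)$ computed from the region/band decomposition with the sum computed from the Betti numbers).

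To combine these, I will use $\sigma(G) = \dim(G) - \text{nullity}(G) - 2n_-(G)$ along with the identity $m + n = c + l - 1$ coming from Euler's formula for the projection graph. Summing the two nullity identities and using $l_b + l_w = l + 1$ gives $\text{nullity}(G_b) + \text{nullity}(G_w) = 2\eta(L) - l + 1$. Substituting into the signature identity
\[
c - 2k = (m+n) - (\text{nullity}(G_b) + \text{nullity}(G_w)) - 2(n_-(G_b) + n_-(G_w))
\]
and simplifying yields $\eta(L) + n_-(G_b) + n_-(G_w) = k + l - 1$, as desired. Finally, the ``in particular'' claim follows because bifactorisability forces the nondegenerate Goeritz quotients to embed in the positive-definite lattices $\zz^{m'}, \zz^{n'}$, making both $G_b$ and $G_w$ PSD.

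The main obstacle I anticipate is the rigorous verification of the signature identity, which requires careful bookkeeping of sign conventions in the Gordon-Litherland formula and of the normal Euler number comparison $e(F_b) - e(F_w) = 2\sum_c \epsilon(c)$; the nullity identity via the branched cover is also delicate when $F_b$ is disconnected, since one must correctly identify $H_1(X_{F_b}; \mathbb{Q})$ and be careful that the Goeritz matrix still represents the intersection form in the stated multicurve basis.
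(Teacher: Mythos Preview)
Your approach is correct and arrives at exactly the same core identity the paper establishes, namely $\eta(L)+b_2^-(X_1)+b_2^-(X_2)=k+l-1$ (in your notation $n_-(G_b)+n_-(G_w)$), but by a genuinely different route. The paper first reduces to a connected diagram via R2 moves and then explicitly identifies the closed $4$-manifold $X=\Sigma_2(S^4,F_b\cup F_w)$ as $j\CP^2\# k\overline{\CP^2}$ by a local connected-sum argument at each crossing; the identity then drops out immediately from Novikov additivity for $\sigma(X)$ and Mayer--Vietoris for $b_2(X)$. You instead extract the signature piece $\sigma(G_b)+\sigma(G_w)=c-2k$ from the Gordon--Litherland formula applied to each chessboard surface, and the rank piece $m+n=c+l-1$ from planar Euler characteristic, handling disconnected diagrams directly via the auxiliary identities $l_b+l_w=l+1$ and $\eta=\mathrm{nullity}(G_b)+l_b-1$ (the latter does hold, since each extra component of $F_b$ contributes an $S^1\times D^3$ boundary-summand to $X_{F_b}$). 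Your route is more bookkeeping-heavy and leans on the Gordon--Litherland correction term as a black box---the sign verification you flag is genuine but standard---whereas the paper's proof is a single clean $4$-manifold computation; on the other hand your argument avoids the crossing-by-crossing identification of $X$ and treats disconnected diagrams uniformly rather than by a preliminary R2 reduction. Both proofs of the final ``in particular'' clause are identical.
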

\begin{proof}  We will give a topological proof involving double branched covers; a proof along the lines of \cite[Proposition 4.1]{greene} should also be possible.  Given any chessboard-coloured diagram $D$ of a link $L$, one may form the smooth closed oriented manifold $X=\Sigma_2(S^4,F_b\cup F_w)$ that is the double cover of $S^4$ along the union of the black and white surfaces, with the interior of the black surface pushed into one hemisphere and the white into the other.  We fix the orientation by specifying that the 3-sphere is the oriented boundary of the hemisphere into which the black surface is pushed.  By construction, this manifold splits along the double branched cover of $L$ into two pieces whose intersection lattices are the Goeritz lattices of $D$.  In other words,
$$X=X_1\cup X_2,$$
where $X_1$ is the double cover of the 4-ball branched along the pushed-in black surface of $D$, and $X_2$ is the double-branched cover of the white surface, with orientation reversed.

Consider first a one-crossing diagram of the unknot.  Choose a chessboard colouring so that the black surface is a M\"{o}bius band and the white surface is a disk.  Then $\Sigma_2(S^4,F_b\cup F_w)$ is $\CP^2$ if the crossing is alternating and $\overline{\CP^2}$ if the crossing is nonalternating (see for example \cite{ak}).

Now consider a general diagram $D$ as in the statement.  If the diagram is disconnected, then one may apply a Reidemeister 2 move (reversing that shown in \Cref{fig:AAreidemeister}).  This reduces the number of diagram components by one and increases the number of nonalternating crossings by one.  It also leaves one Goeritz matrix unchanged and adds a zero row and column to the other, preserving semi-definiteness for both.  Thus it suffices to prove the lemma for a connected diagram.
Assume therefore that  $D$ is a connected diagram with $k$ nonalternating crossings, and we wish to show that $$\eta(L)\le k.$$
Removing a small ball around each crossing gives a diagram in a punctured disk whose black and white surfaces together form a punctured unknotted sphere; the double branched cover is then a punctured $S^4$.  Filling in each puncture corresponds to taking connected sum with the double branched cover arising from a one-crossing diagram.  Thus for a  connected diagram with $j$ alternating crossings and $k$ nonalternating crossings,
$$X=X_1\cup X_2=\Sigma_2(S^4,F_b\cup F_w)\cong j\CP^2\# k\overline{\CP^2}.$$
From the Mayer-Vietoris sequence together with the fact that, since $D$ is connected, $X_1$ and $X_2$ admit handlebody decompositions consisting of one 0-handle and some number of 2-handles \cite{a2016,ak}, we see that
the second Betti number of $X$ is the sum of those of $X_1$ and $X_2$; also by Novikov additivity, the signature of $X$ is the sum of those of $X_1$ and $X_2$.  Moreover from the long exact sequence of the pair $(X_i,\partial X_i)$, we see that, for a connected diagram, the rank of the nullspace of $H_2(X_i)$ is $\eta(L)$.  This yields
\begin{align*}
j+k&=b_2^+(X_1)+b_2^-(X_1)+b_2^+(X_2)+b_2^-(X_2)+2\eta(L)\quad\text{and}\\
j-k&=b_2^+(X_1)-b_2^-(X_1)+b_2^+(X_2)-b_2^-(X_2),
\end{align*}
from which we find
$$k=b_2^-(X_1)+b_2^-(X_2)+\eta(L).$$
This gives $\eta(L)\le k$, with equality if and only if both $X_1$ and $X_2$ are positive semi-definite.

For the last sentence of the statement, observe that if a diagram is bifactorizable, with lattice morphisms as in \eqref{eq:latmorph}, then it follows that both Goeritz lattices $\Lambda_b$ and $\Lambda_w$ are positive semi-definite.
\end{proof}

We say that a link diagram is \emph{minimally nonalternating} if it attains equality in \eqref{eq:etakl}.  By \Cref{lem:semidef}, this is equivalent to both Goeritz matrices of the diagram being positive semi-definite, with our conventions from \Cref{sec:moves}.  We say a link is minimally nonalternating if it admits a minimally nonalternating diagram.  Minimally nonalternating links have been studied in unpublished work of the first author with P.~Lisca and L.~Watson; they may be seen as a natural generalisation of alternating links.

\begin{remark}
\label{rem:MNA}
It would be interesting to know if the characterisation of nonsplit alternating links due to Greene and Howie \cite{greene,howie} extends to links with diagrams attaining equality in \eqref{eq:etakl}.  In other words, is existence of a minimally nonalternating diagram for $L$ equivalent to existence of positive semi-definite surfaces in $S^3$ for both $L$ and its mirror?
\end{remark}

We now confirm the rank formulae given in \Cref{sec:moves} for a bifactorizable diagram.

\begin{lemma}
\label{lem:latdim}
Let $D$ be a bifactorizable link diagram, with $m+1$ white regions, $n+1$ black regions, and $k$ nonalternating crossings.  Let $l_b$ be the number of components of the black surface and let $l_w$ be the number of components of the white surface.  The rank of the quotient  of the black Goeritz lattice by its nullspace is 
$$\rk(\Lambda_b/\Lambda^0_b)=m-k-l_w+1,$$
and similarly
$$\rk(\Lambda_w/\Lambda^0_w)=m-k-l_b+1.$$
\end{lemma}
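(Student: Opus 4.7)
The approach is to combine \Cref{lem:semidef} (which gives $\eta(L) = k + l - 1$ for bifactorisable $D$, where $l$ is the number of diagram components) with a topological identity relating $l_b$, $l_w$, and $l$, and to handle the disconnected case by reduction to the connected case via the almost-alternating Reidemeister 2 move.

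First, I would establish the identity $l_b + l_w = l + 1$ via Mayer--Vietoris applied to the chessboard decomposition $S^2 = F_b \cup F_w$, in which $F_b \cap F_w$ is the underlying 4-valent graph of $D$ with $l$ components.  Since $H_1(S^2) = 0$, the relevant piece of the sequence
\[
0 \to H_0(F_b \cap F_w) \to H_0(F_b) \oplus H_0(F_w) \to H_0(S^2) \to 0
\]
yields the identity by a rank count, independently of bifactorisability.

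Next, I would prove the formula in the connected case.  If $l = 1$ then $l_b = l_w = 1$ by the identity above.  \Cref{lem:semidef} gives $\eta(L) = k$, and the handle decomposition argument in its proof shows that the 4-manifold $X_1 = \Sigma_2(D^4, F_b)$ (whose intersection lattice is $\Lambda_b$ by \Cref{thm:GL}) has a handlebody structure with one 0-handle and 2-handles only, so $H_1(X_1; \qq) = 0$.  The long exact sequence of the pair $(X_1, \partial X_1)$, with $\partial X_1 = \Sigma_2(L)$ having $b_1 = \eta(L)$, then forces $\rk \Lambda_b^0 = \eta(L) = k$, from which $\rk(\Lambda_b/\Lambda_b^0) = m - k = m - k - l_w + 1$ follows.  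The formula for the white lattice follows symmetrically.

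For disconnected $D$, I would reduce to the connected case by applying the almost-alternating Reidemeister 2 move (the reverse of the move in \Cref{fig:AAreidemeister}) $l - 1$ times.  Each such move decreases $l$ by $1$ and increases $k$ by $1$; if the new bigon is white it increases $m$ by $1$ and decreases $l_b$ by $1$ (with $n$ and $l_w$ unchanged), and symmetrically if the bigon is black.  The target quantities $m - k - l_w + 1$ and $n - k - l_b + 1$ are therefore invariant.  A local inspection shows that the two new crossings lie between the same pair of opposite-colour regions flanking the bigon and carry opposite values of $\epsilon$, which forces the new bigon row and column in the corresponding Goeritz matrix to be identically zero while leaving the other Goeritz matrix unchanged; bifactorisability is thus preserved (the lattice factorisation is extended by a zero column), as are $\rk G_b$ and $\rk G_w$.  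Combining the invariance of both sides with the connected case yields the formulae in general.  The main obstacle I anticipate is the last step: verifying that a white-bigon R2 joining two distinct diagram components decreases $l_b$ by exactly one (i.e.\ that the two black regions flanking the bigon lie in distinct components of $F_b$ before the move), and symmetrically for the black-bigon case, which requires a careful inspection of the graph structure of $F_b$ and $F_w$ near the move using that the two strands involved belong to different diagram components.
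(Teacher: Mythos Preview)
Your proof is correct and follows essentially the same route as the paper: both reduce to the connected case via almost-alternating R2 moves and use the long-exact-sequence identification $\rk\Lambda^0_b=\eta(L)$ there, the only difference being that you obtain $l_b+l_w=l+1$ by Mayer--Vietoris whereas the paper extracts it (together with $\eta=\rk\Lambda^0_b+l_b-1$) from the same sequence of R2 moves. Your anticipated obstacle dissolves using your own Step~1: for a white-bigon R2 one checks directly that $l_w'=l_w$ (the new bigon is joined to the remnant $R'$ of $R$ in $F_w$ by the twisted bands at the two new crossings, and $R'$ stays connected because the two arcs being joined lie on distinct boundary components of $R$, these boundary components lying in distinct graph components $C_1$, $C_2$), whence $l_b'=l_b-1$ is forced by the identity $l_b+l_w=l+1$.
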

\begin{proof}
As noted above, bifactorizability implies equality in \eqref{eq:etakl} by \Cref{lem:semidef}.  Also the nullity $\eta$ of $L$ is equal to the nullity of either Goeritz form of any connected diagram.  Note that the R2 move shown in Figure \ref{fig:AAreidemeister} changes the rank of $\Lambda^0_b$ by one but does not change $\Lambda_w$.  By consideration of shading in each of a minimal sequence of R2 moves to transform $D$ to a connected diagram, we find that
$$\eta=\rk\Lambda^0_b+l_b-1,$$
and also that the number of components of $D$ is $l=l_b+l_w-1$.  Combining these identities with the equality in \eqref{eq:etakl} yields the formula for the rank of $\Lambda_b/\Lambda^0_b$, and the same argument applies to the white lattice.
\end{proof}

\begin{proposition}
\label{prop:slicelike}
Let $F$ be a smoothly properly embedded surface in the 4-ball with no closed components and bounded by a link $L$ in $S^3$.  Then the Euler characteristic of $F$ is bounded above by one plus the nullity of $L$,
\begin{equation}
\label{eq:chieta}
\chi(F)\le\eta(L)+1.
\end{equation}
If equality is attained in \eqref{eq:chieta} then the double  cover $\Sigma_2(D^4,F)$ of the 4-ball branched along $F$ is a rational homology $\displaystyle\natural_{\eta(L)}(S^1\times D^3)$.
\end{proposition}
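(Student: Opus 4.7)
The plan is to analyze the double branched cover $W = \Sigma_2(D^4,F)$, a smooth compact oriented $4$-manifold with $\partial W = \Sigma := \Sigma_2(S^3,L)$ and $b_1(\Sigma;\qq) = \eta(L)$, and to translate the proposition into properties of $H_*(W;\qq)$. The first ingredient is the Euler characteristic identity
\[
\chi(W) = 2 - \chi(F),
\]
which follows from the Lefschetz fixed-point formula applied to the covering involution $\tau$ (whose fixed set is a copy of $F$) together with the fact that the quotient $W/\tau = D^4$ is contractible. The desired inequality $\chi(F) \le \eta+1$ is therefore equivalent to $\chi(W) \ge 1 - \eta$.

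The crucial geometric step is the observation that, because $F$ has no closed components, $F$ can be isotoped rel boundary into \emph{banded form}: the radial function $\rho$ on $D^4$ restricts to a Morse function on $F$ with interior critical points only of index $0$ (births creating new disk components of the cross-sectional link) and index $1$ (saddles, i.e.\ band moves), and no index-$2$ critical points. Lifting this decomposition to the branched cover yields a handle decomposition of $W$ using only handles of indices $\le 2$ (two $0$-handles above the origin, one $1$-handle for each index-$0$ critical point of $\rho|_F$, and one $2$-handle for each index-$1$ critical point). In particular $W$ has the homotopy type of a $2$-complex, so
\[
b_3(W;\qq) = b_4(W;\qq) = 0.
\]

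The rest is a short exact-sequence computation. From $\chi(W) = 1 - b_1(W) + b_2(W)$ we obtain $\chi(F) = 1 + b_1(W) - b_2(W)$, so it suffices to show $b_1(W) \le \eta$. Since $W$ and $\Sigma$ are both connected, the tail of the long exact sequence of the pair $(W,\Sigma)$ reads
\[
H_1(\Sigma;\qq) \xrightarrow{i_*} H_1(W;\qq) \to H_1(W,\Sigma;\qq) \to 0,
\]
from which $\dim H_1(W,\Sigma;\qq) \ge b_1(W) - \eta$. By Poincar\'e--Lefschetz duality $H_1(W,\Sigma;\qq) \cong H^3(W;\qq)$, whose dimension is $b_3(W) = 0$ by the previous step; hence $b_1(W) \le \eta$. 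Therefore
\[
\chi(F) = 1 + b_1(W) - b_2(W) \le 1 + b_1(W) \le 1 + \eta,
\]
as claimed. In the equality case every step above must be sharp, forcing $b_1(W) = \eta$ and $b_2(W) = 0$; combined with $b_0(W) = 1$ and $b_3(W) = b_4(W) = 0$ the Betti numbers of $W$ coincide with those of $\natural_\eta(S^1 \times D^3)$, so $W$ has the rational homology of $\natural_\eta(S^1 \times D^3)$.

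The main obstacle I anticipate is rigorously establishing the banded form of $F$: a Morse-theoretic cancellation argument is required to eliminate any index-$2$ critical points of $\rho|_F$ via isotopy, using the no-closed-components hypothesis in an essential way. Once this geometric input is in hand the remainder is an elementary chase through the long exact sequence of the pair $(W,\Sigma)$.
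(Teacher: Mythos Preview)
Your overall architecture is correct, and once $b_3(W)=0$ is in hand the exact-sequence argument you give is clean and complete.  The paper itself simply cites \cite[Proposition~3.1]{aceto} for this result, so in that sense your write-up is more detailed than the paper's.

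The genuine gap is the ``banded form'' step.  The assertion that a properly embedded surface $F\subset D^4$ with no closed components can be isotoped rel boundary so that the radial function on $D^4$ has no index-$2$ critical points on $F$ is exactly the statement that $F$ is ambiently isotopic to a \emph{ribbon} surface.  When $F$ is a disk this is the slice--ribbon conjecture, which remains open; for general surfaces it is no easier.  The hypothesis that $F$ has no closed components guarantees only that $F$ \emph{abstractly} admits a handle decomposition without $2$-handles---it gives no control over whether the index-$2$ critical points of $\rho|_F$ can be cancelled by an \emph{ambient} isotopy in $D^4$.  The ``Morse-theoretic cancellation argument'' you anticipate needing is not available, and supplying it would resolve a major open problem.

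The vanishing $b_3(W;\qq)=0$ is nonetheless true and is what the cited reference establishes, but by a route that exploits the branched-cover structure rather than a handle decomposition of $W$.  Over $\qq$ the transfer for the covering involution $\tau$ splits $H_k(W)=H_k(W)^+\oplus H_k(W)^-$ with $H_k(W)^+\cong H_k(D^4)$, so $H_3(W)^+=0$ immediately.  The $(-1)$-eigenspace is then controlled via the free double cover $W\setminus\tilde F\to D^4\setminus F$ (equivalently, by showing that $H_1(\Sigma;\qq)\to H_1(W;\qq)$ is surjective); it is precisely the hypothesis that every component of $F$ meets $S^3$---equivalently $H_2(F;\qq)=0$---that makes this go through.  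With $b_3(W)=0$ established in this way, the remainder of your argument applies verbatim.
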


\begin{proof}
This follows easily from the proof of \cite[Proposition 3.1]{aceto}.
\end{proof}

A $\mu$-component link is said to be \emph{strongly slice} if it bounds $\mu$ disjoint smoothly properly embedded disks in the 4-ball.  Strongly slice links with their slicing disks attain equality in \eqref{eq:chieta}.  We note that the following proposition gives an obstruction for a minimally nonalternating link to be strongly slice.

\begin{proposition}
\label{prop:bif}
Let $L$ be a link in $S^3$, and let $D$ be a chessboard-coloured diagram of $L$ with $l$ diagram components and $k$ nonalternating crossings.  Suppose that $L$ bounds a smoothly properly embedded surface $\Delta$ in the 4-ball with no closed components and with $\chi(\Delta)=k+l$.
Then $D$ is bifactorizable.
\end{proposition}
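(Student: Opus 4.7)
The plan is to build a closed, positive-definite, smooth 4-manifold $W_b$ whose intersection lattice (modulo torsion) contains $\Lambda_b/\Lambda_b^0$ as a finite-index sublattice, and then apply Donaldson's diagonalisation theorem; a mirror construction using $F_w$ in place of $F_b$ will handle $\Lambda_w/\Lambda_w^0$, completing the proof. The first step is to extract numerical content from the hypothesis. Combining $\chi(\Delta) = k+l$ with \Cref{lem:semidef} and \Cref{prop:slicelike} forces equality in both \eqref{eq:etakl} and \eqref{eq:chieta}: hence $\eta(L) = k+l-1$, both Goeritz forms of $D$ are positive semi-definite, and $Y_\Delta := \Sigma_2(D^4,\Delta)$ is a rational homology $\natural_{\eta(L)}(S^1 \times D^3)$. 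By the R2-move reduction from the proof of \Cref{lem:semidef} (which preserves both the identity $\chi(\Delta) = k+l$ and bifactorisability, cf.\ \Cref{fig:AAreidemeister}), I may assume $D$ is connected, so $l_b = l_w = 1$ and $\rk\Lambda^0_b = \rk\Lambda^0_w = k$.

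Now I would form
$$W_b := \Sigma_2(S^4, F_b \cup \Delta) = X_1 \cup_{\Sigma_2(L)} (-Y_\Delta),$$
the double cover of $S^4$ branched along the closed surface obtained by pushing $F_b$ into one hemisphere and $\Delta$ into the other, with $X_1$ as in the proof of \Cref{lem:semidef}. Running rational Mayer-Vietoris, I would use three standard inputs: (i) the image of $H_2(\Sigma_2(L);\qq) \to H_2(X_1;\qq)$ equals $\Lambda_b^0 \otimes \qq$ (via the long exact sequence of $(X_1,\partial X_1)$ and Poincar\'e-Lefschetz duality); (ii) $H_2(Y_\Delta;\qq) = 0$ and the inclusion induces an isomorphism $H_1(\Sigma_2(L);\qq) \to H_1(Y_\Delta;\qq)$, both from $Y_\Delta$ being a rational homology handlebody; and (iii) $H_1(X_1;\qq) = 0$ since $X_1$ has a handle decomposition with no 1-handles. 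Together these identify $H_2(W_b;\qq)$ with $(\Lambda_b/\Lambda_b^0) \otimes \qq$ intertwining intersection forms, and they upgrade (since $\Lambda_b^0$ lifts to $H_2(\Sigma_2(L);\zz)$ and is killed by Mayer-Vietoris) to an embedding of lattices $\Lambda_b/\Lambda_b^0 \hookrightarrow H_2(W_b;\zz)/\mathrm{torsion}$ with finite-index image.

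Finally, Novikov additivity gives $\sigma(W_b) = \sigma(X_1) + \sigma(-Y_\Delta) = (m-k) + 0 = b_2(W_b)$, so $W_b$ is positive definite. Donaldson's diagonalisation theorem then identifies $H_2(W_b;\zz)/\mathrm{torsion}$ with $\zz^{m-k}$ and yields the required bifactorisation of $G_b$; the symmetric construction using $F_w$ in place of $F_b$ (with reversed orientation) gives the bifactorisation of $G_w$. The main obstacle I anticipate is the Mayer-Vietoris bookkeeping needed to pass from the rational isomorphisms in (ii) and (iii) to the integral statement that $\Lambda_b/\Lambda_b^0$ sits in $H_2(W_b;\zz)/\mathrm{torsion}$ as a finite-index sublattice; once that is secured, Donaldson's theorem delivers the conclusion immediately.
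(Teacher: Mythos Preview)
Your proposal is correct and follows essentially the same route as the paper: form the closed manifold $X_1 \cup (-\Sigma_2(D^4,\Delta))$, use Mayer--Vietoris together with the rational-homology-handlebody property of $\Sigma_2(D^4,\Delta)$ to identify $\Lambda_b/\Lambda_b^0$ as a finite-index sublattice of $H_2$, observe positive definiteness, and apply Donaldson. The only cosmetic differences are your explicit reduction to a connected diagram and your use of Novikov additivity to verify definiteness, whereas the paper simply notes that a finite-index sublattice being positive definite forces the ambient lattice to be positive definite as well.
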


\begin{proof}
Let $X_1=\Sigma_2(D^4,F_b)$ be the double cover of the 4-ball branched along the black surface of the diagram $D$, and let $X_2=-\Sigma_2(D^4,\Delta)$ be the double-branched cover of the surface $\Delta$, with reversed orientation.  Let $Y=\partial X_1=-\partial X_2$ denote the double cover of $S^3$ branched along $L$.
The assumption that $\chi(\Delta)=k+l$ shows that in fact equality is attained in both \eqref{eq:etakl} and \eqref{eq:chieta}.
By \Cref{lem:semidef}, $X_1$ is positive semi-definite, and by \Cref{prop:slicelike}, $X_2$ is a rational homology 
$\displaystyle\natural_{\eta(L)}(S^1\times D^3)$.  

By the long exact sequences of the pairs $(X_i,Y)$ we have that the image of the inclusion-induced map $H_2(Y;\zz)\rightarrow H_2(X_1;\zz)$ is the nullspace of the intersection lattice $\Lambda_b$ of $X_1$, and the inclusion-induced map $H_1(Y;\qq)\rightarrow H_1(X_2;\qq)$ is an isomorphism.  Then the Mayer-Vietoris homology sequence with integer coefficients for $X=X_1\cup X_2$ shows that $H_2(X;\zz)$ contains the quotient of $\Lambda_b$ by its nullspace as a finite-index sublattice.  It follows that $X$ is a smooth closed positive-definite manifold, which then has a standard intersection lattice by Donaldson's diagonalisation theorem \cite{thmA}, so that the inclusion map of $X_1$ into $X$ induces a finite-index lattice embedding
$$\Lambda_b/\Lambda^0_b\rightarrow\zz^{b_2(X)},$$
as required.

The same argument applied to the mirror of $D$ with the other chessboard colouring yields a finite-index embedding of
$\Lambda_w/\Lambda^0_w$ in a standard diagonal lattice.
\end{proof}

The results above guide our search for ribbon disks for alternating knots.  An alternating knot has nullity zero, and a disk has Euler characteristic one.  More generally suppose we have a link $L$ with a chessboard-coloured diagram $D$, with $m+1$ white regions, $n+1$ black regions, $k$ nonalternating crossings, and $l$ diagram components, and admitting equality in \eqref{eq:etakl}, and we seek a slice surface $\Delta$ as in \Cref{prop:bif} with equality in \eqref{eq:chieta}.
We first check if the given diagram is bifactorizable; if not we know that no such slice or ribbon surface exists.  If it is bifactorizable we will attempt to find a ribbon surface $\Delta$ with $\chi(\Delta)=k+l$.  Such a surface admits a movie presentation in the form of a sequence of $b$ band moves and isotopies applied to the initial diagram and ending up with a diagram of the $(b+k+l)$-component unlink.  A band move can change the nullity of a link by at most one, and the nullity of the unlink exceeds that of $L$ by $b$, and thus each band move increases the nullity by one.  Each intermediate link in the sequence bounds the surface given by attaching bands as 1-handles to the black surface for the diagram $D$.  The double branched cover of this surface embeds in the closed positive-definite manifold $X=\Sigma_2(S^4,F_b\cup-\Delta)$ with $b_2(X)=m-k-l+1$ as in the proof of \Cref{prop:bif}.
A naive approach, which yields success surprisingly often, is to ask that such an embedding exists for the double branched cover of the black surface of the diagram obtained from $D$ by applying the given band moves.  A similar discussion applies to the white surface.  Thus our algorithm  seeks bands with the following property: they increase the nullity by one, and they preserve bifactorizability and also the ranks of the target diagonal lattices as in \eqref{eq:latmorph}.  It turns out that there are only finitely many such bands in any given diagram.

\begin{proposition}
\label{prop:algband}
Let $D$ be a bifactorizable diagram, and let $\beta$ be a band in standard position with respect to $D$.  Let $D'$ be the diagram resulting from $D$ by applying the band move determined by $\beta$.  Let $m+1,n+1,k,l,l_b,l_w$ (respectively, $m'+1,n'+1,k',l',l_b',l'_w$) denote the number of white regions, black regions, nonalternating crossings, diagram components, black surface components, and white surface components of $D$  (respectively, of $D'$).
Suppose that $D'$ is bifactorizable and the nullity of $D'$ is one greater than the nullity of $D$, and that the ranks as in \eqref{eq:latmorph} are preserved:
\begin{equation}
\label{eq:banddim}
\begin{aligned}
m'-k'-l'_w&=m-k-l_w,\\
n'-k'-l_b'&=n-k-l_b.
\end{aligned}
\end{equation}
Then  $\beta$ has length one, two, or three, and has twist number zero.  If $\beta$ is of length one or three, then it disconnects two connected summands; moreover if it is of length three, then it is isotopic via generalised Tsukamoto moves to an untwisted band of length one.  If it is of length two, then it is untwisted locally as well as globally, or in other words appears as in the right hand side of \Cref{fig:algbands}.\end{proposition}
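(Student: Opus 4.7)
The plan is to translate the three hypotheses into combinatorial identities, compare them against the crossing count dictated by the band's length and twist, use a local symmetry argument to rule out nonzero twist, and finally bound the length by a finer analysis of the Goeritz structure.

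Apply \Cref{lem:semidef} to both $D$ and $D'$: bifactorisability implies $\eta = k + l - 1$ in each case, so the nullity hypothesis becomes $\Delta k + \Delta l = 1$. Combining \Cref{lem:latdim} with the identity $l = l_b + l_w - 1$ converts the rank-preservation identities \eqref{eq:banddim} into $\Delta m = \Delta k + \Delta l_w$ and $\Delta n = \Delta k + \Delta l_b$. Using Euler's formula $V = m + n + 1 - l$ for the $4$-valent planar graph then gives
\[
\Delta V = \Delta m + \Delta n - \Delta l = 2\Delta k.
\]
On the other hand, a band of length $\ell$ with twist $t$ in standard position creates exactly $2(\ell - 1)$ new crossings from the pairs at the $\ell - 1$ interior transverse intersections of its core with $D$, plus $|t|$ crossings from the half-twists. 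Matching with $2\Delta k$ forces $|t|$ to be even and $\Delta k = \ell - 1 + |t|/2$.

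To rule out nonzero twist, compare this formula against the direct count of non-alternating crossings in $D'$. As already noted in the discussion of \Cref{fig:algbands}, a chessboard-colouring check at each interior transverse intersection shows that exactly one of the two new crossings produced there is non-alternating, so these contribute $\ell - 1$ to $\Delta k$ in total. For the twist crossings the key observation is a local symmetry: the $|t|$ crossings produced by same-sign half-twists form clusters related by translation along the band, and translation preserves both the chessboard colouring and the over/under data, so within each cluster all crossings share a common value of $\epsilon$; a case analysis over the two possible colours of the region containing each cluster shows that different clusters along the band also end up with the same value of $\epsilon$. Hence the twists contribute either $0$ or $|t|$ to $\Delta k$, and matching with $\ell - 1 + |t|/2$ forces $|t| = 0$.

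With $t = 0$ one has $\Delta k = \ell - 1$ and $\Delta l = 2 - \ell$. The main obstacle is to bound $\ell \le 3$: the a priori combinatorial inequality $\Delta l \ge -\ell$ coming from the worst-case merging of diagram components is too weak. The finer argument tracks $\Delta l_b$ and $\Delta l_w$ separately and carefully analyses how the band's $\ell$ segments interact with both chessboard surfaces; imposing bifactorisability on $D'$ together with the preserved ranks \eqref{eq:banddim} then forces $\ell \le 3$. Once this is established, the three surviving cases match the stated structural possibilities directly: for $\ell = 2$ the single interior intersection with $t = 0$ produces exactly the local picture on the right of \Cref{fig:algbands}, while for $\ell = 1$ (with $\Delta l = +1$) and for $\ell = 3$ (with $\Delta l = -1$) the corresponding topological analysis identifies the band as sitting across the neck of a connected-sum decomposition, so that its band move separates the two summands.
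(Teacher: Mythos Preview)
Your opening is fine and matches the paper: from bifactorisability and the nullity hypothesis you correctly obtain $\Delta k+\Delta l=1$, and combining the rank equations with Euler's formula gives $\Delta V=2\Delta k$.  After that, however, there are two genuine gaps.

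First, your twist argument is not correct as written.  The number of crossings created by half-twists is $\sum_i|t_i|$ (summed over the regions the band passes through), not $|t|=\bigl|\sum_i t_i\bigr|$, so the identity $\Delta k=\ell-1+|t|/2$ does not follow.  Moreover the claimed ``local symmetry'' showing that all twist crossings share a common value of $\epsilon$ is false: a same-sign half-twist placed in a region of the opposite colour has the opposite $\epsilon$, so the twist contribution to $\Delta k$ need not be $0$ or $\sum|t_i|$.  The paper's argument is different and uses positive semi-definiteness of the Goeritz lattices of $D'$ directly: an alternating twist adjacent to a nonalternating one creates a $2$-gon of square zero with nonzero pairing against its neighbours, and a nonalternating twist adjacent to a band--arc crossing creates a triangle of square $-1$; both contradict semi-definiteness.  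This rules out any twisting for $\ell>1$ and forces at most one twist of each sign for $\ell=1$.

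Second, and more seriously, the length bound $\ell\le 3$ and the structural conclusions for $\ell=1,3$ are not proved at all in your proposal---the sentence ``the finer argument tracks $\Delta l_b$ and $\Delta l_w$ \dots\ then forces $\ell\le 3$'' is an assertion, not an argument.  The mechanism you are missing is again semi-definiteness.  For $\ell\ge 3$ the band creates rectangular regions $C_i$ ($1<i<\ell$) with two alternating and two nonalternating crossings, hence square zero; semi-definiteness then forces each $C_i$ to pair trivially with every region, which constrains how the neighbouring regions $A_{i\pm1},B_{i\pm1}$ must coincide.  For $\ell>3$ the only consistent pattern is $A_i=B_i$ for all $i$, whence the band reduces the number of diagram components by $\ell$ rather than $\ell-2$, contradicting $\Delta l=2-\ell$.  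The same square-zero analysis at $\ell=3$ and $\ell=1$ is what pins down the connected-sum structure.  Without invoking semi-definiteness of the Goeritz forms of $D'$ you have no tool to control these region identifications, and the proof cannot be completed along the lines you sketch.
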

\begin{proof}  A bifactorizable diagram gives rise to equality in \eqref{eq:etakl}.  Thus since the band move increases nullity by one, it follows that
\begin{equation}
\label{eq:bandkl}
k'+l'=k+l+1.
\end{equation}

Disconnected diagrams may be converted to connected diagrams by R2 moves or by planar connected sum bands, each of which also connects two components of one of the black and white surfaces; it follows as in the proof of \Cref{lem:latdim} that
$$l=l_b+l_w-1.$$
A similar argument combined with Euler's formula for connected planar graphs shows that the crossing number of $D$ is given by
$$\cross(D)=m+n-l+1.$$
Summing the equations in \eqref{eq:banddim} then yields
$$\cross(D')-\cross(D)=2(k'-k),$$
or in other words that the band $\beta$ introduces an even number of crossings, half of which are nonalternating.

Suppose now that the band $\beta$ has length $\lambda>1$.  This means that the band crosses arcs of the diagram $D$, over or under, $\lambda-1$ times, giving rise to $2\lambda-2$ crossings, half of which are nonalternating.  We claim that these are the only new crossings in $D'$, or in other words that the band $\beta$ has no twisting in any region.  This follows  from the fact that both Goeritz lattices of a bifactorizable diagram are positive semi-definite.  First note that the band $\beta$ cannot have an alternating twist adjacent to a nonalternating twist, as these would give rise to a 2-sided region having square zero in the Goeritz lattice, but nonzero intersection with two  adjacent regions, contradicting positive semi-definiteness.  Now suppose there is any twisting in the band.  This has to give rise to an equal number of alternating and nonalternating crossings, so in particular there must be some nonalternating crossing within the band that is adjacent to a crossing of the band over or under an arc of the diagram $D$; this gives rise to a triangular region with square $-1$ in the Goeritz lattice, again contradicting positive semi-definiteness. 

Thus the band $\beta$ introduces exactly $2(\lambda-1)$ crossings, from which we conclude that $k'-k=\lambda-1$, and then from \eqref{eq:bandkl} we have
$$l-l'=\lambda-2.$$
In other words, if the length $\lambda$ is greater than one, then the band reduces the number of diagram components by $\lambda-2$.

Choose an orientation of the core of the band so that we may refer to regions on the left and on the right.  Label the regions on the left $A_1$, $A_2$, \dots , $A_\lambda$ in order along the band, and label the regions on the right of the band $B_1$, $B_2$, \dots, $B_\lambda$.  For $1<i<\lambda$, the regions $A_i$ and $B_i$ are separated by a rectangle $C_i$ in the band, as in Figure \ref{fig:ABC5}, which has two alternating crossings and two nonalternating crossings and hence square zero in the Goeritz lattice.  It follows from semi-definiteness that $C_i$ cannot have nonzero pairing with any other element of the lattice.  We see that $C_i$ shares a crossing with each of $A_{i-1}$, $B_{i-1}$, $A_{i+1}$, and $B_{i+1}$, from which it follows that these four regions are not in fact distinct in the diagram; they are connected in such a way that the Goeritz pairings with $C_i$ cancel out.

\begin{figure}[htbp]
\centering
\includegraphics[scale=0.6]{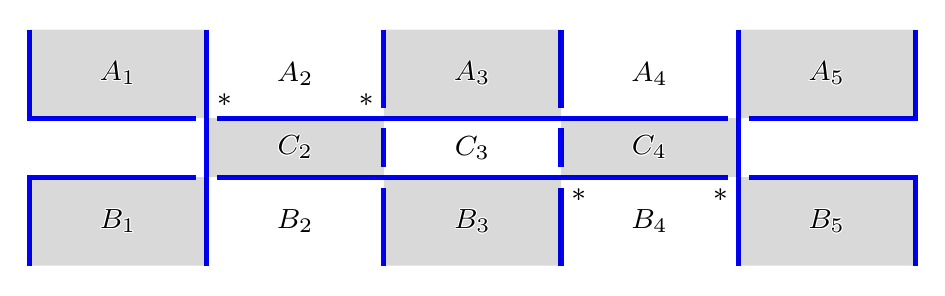}
\begin{narrow}{0.3in}{0.3in}
\caption
{{\bf Regions around a band.} }
\label{fig:ABC5}
\end{narrow}
\end{figure}

We now establish that this rules out bands of length greater than 3.  If $\lambda>3$, then we cannot have $A_1$ and $A_3$ being the same region, as this would isolate $A_2$ giving a nonzero pairing between $A_2$ and $C_3$, contradicting semi-definiteness.  In the same way we cannot have $B_1=B_3$, $A_1=B_3$, or $A_3=B_1$.  The only remaining way to have the regions connect to give zero pairing with $C_2$ is to have $A_1=B_1$ and $A_3=B_3$.  The same argument applies along the band to show that in fact we must have $A_i=B_i$ for $i=1,\dots,\lambda$.  This implies that in fact the band reduces the number of diagram components by $\lambda$, which is a contradiction.

We next consider an untwisted band of length $\lambda=3$.  Label the regions as in Figure \ref{fig:ABC3}.  Again we see there is a rectangular region $C_2$ in the band which has square zero and thus cannot have nonzero pairing with any other region.  Suppose now that $C_2$ has two nonalternating crossings on the same side of the band (left or right), as in the first diagram in Figure \ref{fig:ABC3}.  It follows that $A_1=B_1$ and $A_3=B_3$ (and possibly all four are the same region).  This implies that the band reduces the number of diagram components by at least two, which is a contradiction.

\begin{figure}[htbp]
\centering
\includegraphics[scale=0.6]{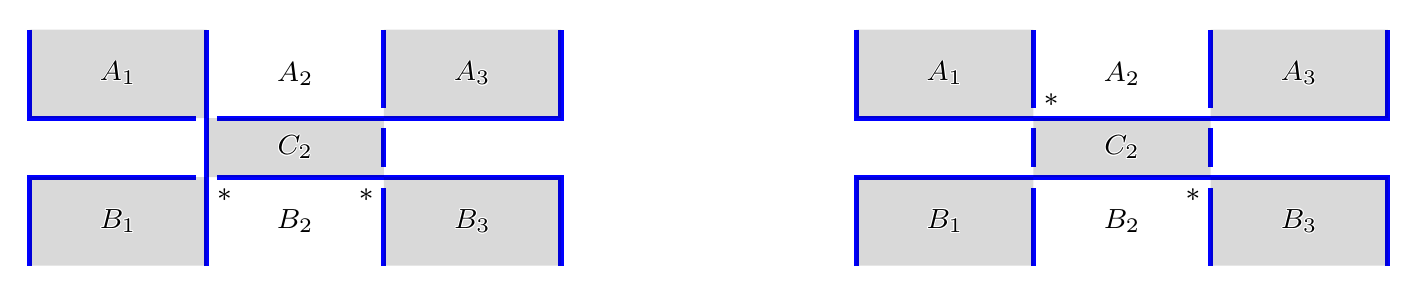}
\begin{narrow}{0.3in}{0.3in}
\caption
{{\bf Regions around bands of length three.} }
\label{fig:ABC3}
\end{narrow}
\end{figure}

Suppose then that we have a band of length 3 as in the second diagram in Figure \ref{fig:ABC3}, with one nonalternating crossing on each side of the band.  We again see that $A_1=B_1$ implies $A_3=B_3$ and the band reduces the number of diagram components by at least two.  We conclude that $A_1=A_3\ne B_1=B_3$, giving a band of the form shown in Figure \ref{fig:length3}, which disconnects two connected summands.  Moreover, this band can be isotoped to a length one band, as shown in Figure \ref{fig:length3}, by a sequence of generalised Tsukamoto moves (two flypes and two R2 moves).

\begin{figure}[htbp]
\centering
\includegraphics[scale=0.6]{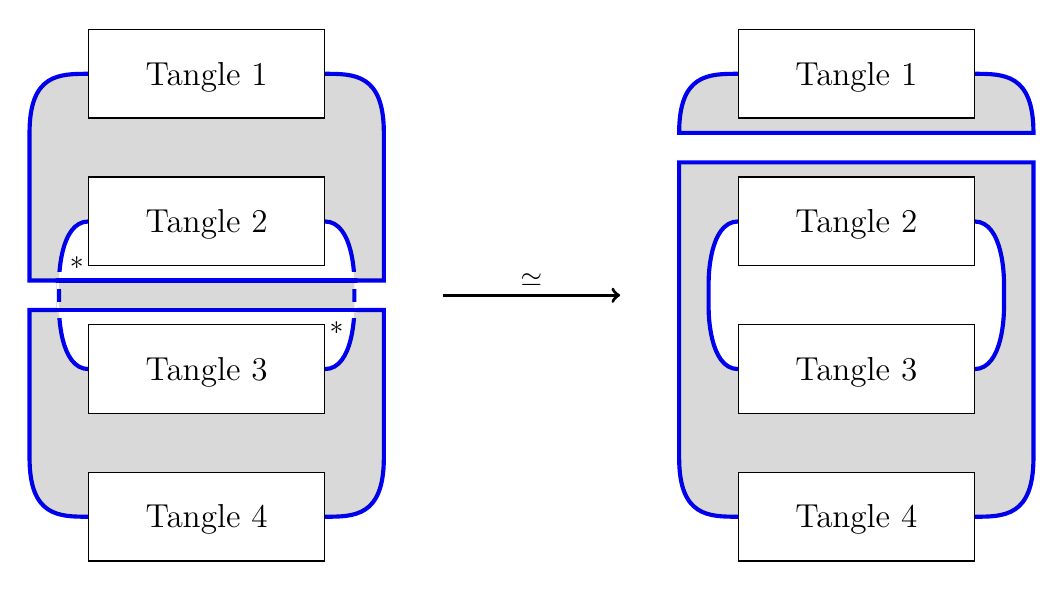}
\begin{narrow}{0.3in}{0.3in}
\caption
{{\bf An uninteresting band of length three.} }
\label{fig:length3}
\end{narrow}
\end{figure}

Finally we consider the case of length one.  Label the regions adjacent to the band as in Figure \ref{fig:length1}.  
Note that as above the band must introduce the same number of alternating and nonalternating crossings, and one cannot have two adjacent nonalternating crossings as these would lead to a 2-sided region with negative square in the Goeritz lattice, contradicting positive semi-definiteness.  Any two adjacent crossings in the band then yield a 2-sided region of square zero, which cannot have nonzero pairing with any other region.  It follows that the band either has no twisting, with $k'=k$, or has one alternating and one nonalternating crossing, with $k'=k+1$, and the latter case can only occur if $C_1=C_2$.

We first consider the case $C_1=C_2$.  As above, the band is either untwisted or has one alternating and one nonalternating crossing.  These two possibilities are related by an R2 move, and both have the effect of disconnecting a connected sum.

\begin{figure}[htbp]
\centering
\includegraphics[scale=0.6]{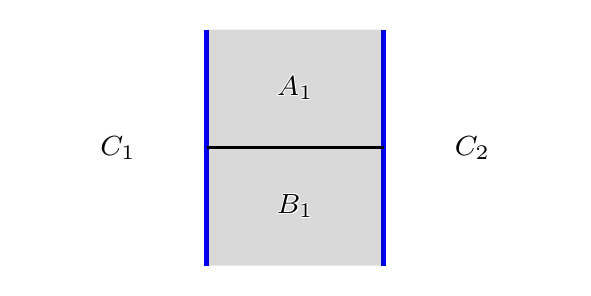}
\begin{narrow}{0.3in}{0.3in}
\caption
{{\bf Regions around a band of length one.} The horizontal arc is the core of the band. }
\label{fig:length1}
\end{narrow}
\end{figure}

The second case to consider is $A_1=B_1$.  This implies $l'=l-1$ and then $k'=k+2$ by \eqref{eq:bandkl}, which we have seen is impossible.  The final case is $A_1\ne B_1$ and $C_1\ne C_2$.  This implies $l'=l$ and $k'=k+1$, but we have seen that this can only happen if $C_1=C_2$.
\end{proof}

The preceding proposition motivates the definition of algorithmic bands given in \Cref{def:algband}.  A couple of remarks are in order.  First of all, we do not include untwisted bands of length one or three which disconnect a connected sum, even though these appear in the statement of \Cref{prop:algband}.  This is because, based in part on consideration of examples, we do not expect these to be useful in finding ribbon disks.  Algorithmic ribbon disks for prime alternating knots may indeed have intermediate links which are nonprime.  However, an algorithmic ribbon disk incorporating such a band $\beta$ will include subsequent algorithmic bands which convert each of the connected summands to unlinks, and we expect that these bands would suffice without the band move determined by $\beta$. Secondly, we include bands of length one and twist number one, with one nonalternating crossing in the band, and for which the resulting diagram $D'$ is bifactorizable.  This is simply a computational short cut; such a band move is equivalent to a length two untwisted algorithmic band followed by an R1 move.  Finally it is interesting to compare our resulting candidate algorithmic bands as in Figure \ref{fig:algbands} to those appearing in a slightly different context in \cite[Conjecture 11.2 and Figure 31]{BL}.

We now recall a theorem of Tsukamoto and state a conjectured generalisation of it.  These are intended to offer some insight into the success of the generalised Tsukamoto moves in the algorithm.  These moves are typically necessary after each band move in order to find the next band move, or to simplify to a crossingless diagram when a band move results in the unlink.

Tsukamoto used variants of the moves in Figures \ref{fig:flype}, \ref{fig:AAreidemeister}, and \ref{fig:untongue} to simplify almost-alternating diagrams of the unknot and 2-component unlink \cite{tsuk2,tsuk1}.  In particular, the main theorem of \cite{tsuk2} implies the following.  Here a $k$-almost alternating diagram is a diagram with $k$ nonalternating crossings.

\begin{theorem}
\label{thm:tsuk}
Every 1-almost alternating diagram of the 2-component unlink may be converted to the standard crossingless diagram by a finite sequence of flypes, untongue moves, Reidemeister 1 moves and a single Reidemeister 2 move.
\end{theorem}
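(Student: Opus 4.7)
The plan is to proceed by induction on the crossing number $c(D)$ of the 1-almost alternating diagram $D$ of the 2-component unlink. The first step is to exhaustively apply Reidemeister~1 moves to eliminate every nugatory crossing; these strictly reduce $c(D)$ and preserve the property of being a 1-almost alternating diagram of the 2-unlink, since the unique diagram component and the unique nonalternating crossing $c^*$ are untouched.

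The main work is to show that any R1-reduced such diagram $D$, other than the standard ``R2 position'' of \Cref{fig:AAreidemeister}, admits either a flype or an untongue move which, possibly followed by further R1 reductions, strictly decreases the crossing number. The complement $D\setminus\{c^*\}$ is an alternating 4-ended tangle diagram, and I would analyse the region structure of $D$ near $c^*$. By \Cref{lem:semidef}, the minimally nonalternating hypothesis (here $k=l=1$, matching the nullity of the 2-unlink) forces both Goeritz lattices to be positive semi-definite. Consequently, any bigon region must be in the nullspace of its Goeritz pairing, a fact which sharply restricts the admissible local configurations about $c^*$. The case analysis then aims to show that one of the following holds: two regions meeting $c^*$ share an additional crossing, so that (after at most one flype) an untongue move applies; or a flype slides $c^*$ past an adjacent alternating crossing and exposes a nugatory crossing that the next R1 move removes.

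To rule out a ``stuck'' reduced diagram with no available move other than the R2 position, I would use the fact that the double branched cover of the 2-unlink is $S^1\times S^2$, so the black surface of $D$ admits a compressing disk in $D^4$; tracking the intersection of such a compressing disk with the diagram and simplifying it via flypes produces a visible tongue in $D$, which is precisely the input required by the untongue move. This geometric reduction, combined with the combinatorial restriction from the semi-definite Goeritz form, is the heart of Tsukamoto's argument and is where the main difficulty lies: one must enumerate the possible terminal configurations explicitly and verify that each is either R1-reducible, flype-reducible, untongue-reducible, or is the R2 position itself.

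Once this reduction step is established, the induction concludes: repeated application of flypes, untongue moves, and R1 reductions strictly decreases the crossing number until $D$ reaches the R2 position, at which point a single R2 move produces the crossingless diagram of the 2-unlink. The main obstacle I expect is therefore the combinatorial case analysis identifying the R2 position as the unique terminal configuration, rather than any individual step of the induction.
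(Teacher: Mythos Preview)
The paper does not contain a proof of this theorem. It is stated as a consequence of Tsukamoto's main result in \cite{tsuk2}, with the sentence ``In particular, the main theorem of \cite{tsuk2} implies the following'' immediately preceding the statement, and no proof is given. So there is no ``paper's own proof'' to compare your proposal against; the result is quoted from the literature.

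As to your sketch on its own merits: the overall architecture (induction on crossing number, reduce to an R1-free diagram, show such a diagram is either the R2 position or admits a simplifying flype/untongue) is the natural one and is indeed roughly how Tsukamoto's argument in \cite{tsuk1,tsuk2} is organised. However, what you have written is a plan rather than a proof, and you acknowledge as much. Two specific concerns: first, your proposed use of positive semi-definiteness of the Goeritz lattices to constrain bigons is suggestive but does not by itself locate a tongue or a useful flype; Tsukamoto's actual argument is a delicate combinatorial case analysis on the regions incident to the nonalternating crossing and does not pass through \Cref{lem:semidef}. Second, the paragraph invoking a compressing disk for the black surface via $\Sigma_2(S^3,L)\cong S^1\times S^2$ is not a valid reduction as stated: a compressing disk for the pushed-in black surface in $D^4$ does not obviously project to, or isotope to, anything planar that produces a visible tongue in the diagram, and no mechanism is given for ``tracking its intersection with the diagram.'' This step would need substantial justification or replacement before the sketch could be considered a proof.
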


We conjecture a generalisation of Tsukamoto's result, which is relevant to the algorithm we describe later in this paper.

\begin{conjecture}
\label{conj:unlink}
Let $k$ be a natural number.  Then there exists a finite set \,$\calu_k$ of generalised Tsukamoto moves such that each $k$-almost alternating diagram of the $(k+1)$-component unlink may be converted to the standard crossingless diagram by a finite sequence of moves from $\calu_k$.
\end{conjecture}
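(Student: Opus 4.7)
The plan is to proceed by induction on $k$. The base case $k=0$ reduces to the classical fact that any alternating diagram of the unknot may be simplified to the trivial diagram via flypes and Reidemeister moves, while $k=1$ is Tsukamoto's \Cref{thm:tsuk}. For the inductive step, I would assume the result holds for all $k' < k$ with finite sets $\calu_{k'}$, and aim to exhibit, for each $k$-almost alternating diagram $D$ of the $(k+1)$-component unlink, a sequence of moves from some finite $\calu_k$ reducing $D$ to the crossingless diagram. Since generalised Tsukamoto moves preserve the sum of the number of diagram components and the number of nonalternating crossings, this sum forces $D$ to be connected, so one may restrict attention to connected $D$.

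The main idea is to identify a single nonalternating crossing $c$ in $D$ together with a bounded-complexity tangle around $c$ to which a simplifying generalised Tsukamoto move applies. Applying such a move should either (a) split off a crossingless unknot component (reducing to a $(k-1)$-almost alternating diagram of the $k$-component unlink, where the inductive hypothesis takes over), or (b) strictly reduce the total number of crossings while preserving $k$ and the underlying link. One expects the untongue, untongue-2, and conjectural higher untongue-$j$ moves to serve as the split-off moves, while flypes and AA-Reidemeister moves (Figures \ref{fig:flype}--\ref{fig:AAreidemeister}) reposition $D$ so that such a split-off tangle becomes visible. Bifactorisability of $D$, guaranteed by \Cref{prop:bif} applied to the $(k+1)$ disjoint slicing disks bounding the unlink, should yield strong algebraic constraints on the diagram's combinatorics and force the existence of a simplifying local structure around at least one nonalternating crossing.

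The main obstacle is establishing finiteness of the required set of templates for a fixed $k$. A priori, the local tangle around $c$ can be arbitrarily complex, and one must argue that the bifactorisation structure---perhaps combined with a suitable complexity measure such as the crossing count of the smallest sub-tangle containing $c$ and the nonalternating crossings adjacent to it---can always be reduced by flypes and AA-Reidemeister moves to one of only finitely many standard forms. Carrying this out rigorously appears to require either a detailed classification of minimally nonalternating diagrams of unlinks (via a characterisation analogous to that of Greene and Howie, as suggested in \Cref{rem:MNA}), or a new numerical refinement of the Goeritz lattice data that strictly decreases under the proposed moves. Experimental data obtained by running the algorithm on unlink diagrams within the KLO implementation would likely be essential for identifying the precise family of required templates and for guiding the proof.
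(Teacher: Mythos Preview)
This statement is presented in the paper as a \emph{conjecture}, not a theorem: the authors explicitly offer it as an open generalisation of Tsukamoto's \Cref{thm:tsuk}, and no proof is given or claimed anywhere in the paper. There is therefore no proof in the paper against which to compare your proposal.

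Your proposal is, by its own admission, a research plan rather than a proof. You correctly isolate the essential difficulty---showing that a \emph{finite} set of templates suffices for each $k$---and you candidly acknowledge that resolving it ``appears to require either a detailed classification of minimally nonalternating diagrams of unlinks \dots\ or a new numerical refinement of the Goeritz lattice data,'' neither of which you provide. The inductive framework you sketch is a reasonable heuristic, and the idea of using bifactorisability (via \Cref{prop:bif}) to constrain the local combinatorics near a nonalternating crossing is natural. But the passage from ``bifactorisability imposes algebraic constraints'' to ``a simplifying tangle of bounded complexity must exist'' is precisely the content of the conjecture, and nothing in your outline bridges that gap. Your suggestion that higher ``untongue-$j$'' moves should play the role of split-off moves is speculation; no such family is defined in the paper, and there is no argument that any fixed finite list would suffice. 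In short, what you have written is a plausible line of attack on an open problem, not a proof of it, and the paper itself makes no stronger claim.
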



\section{Examples}
\label{sec:examples}

In this section we give some examples of algorithmically ribbon knots and links, and state a conjecture.  Further examples may be found on the accompanying web page at \url{www.klo-software.net/ribbondisks}.

A recent paper of Brejevs has made use of our algorithm in the study of alternating 3-braid closures \cite{brejevs}; he exhibits ribbon surfaces with Euler characteristic one for several families of these.
Although not explicitly verified there, it seems reasonable to conclude that all alternating 3-braid closures which are currently known to be $\chi$-slice are in fact algorithmically ribbon.

\begin{proposition}
\label{prop:lisca1}
Two-bridge links which bound smoothly embedded Euler characteristic one surfaces in the 4-ball without closed components are algorithmically ribbon.
\end{proposition}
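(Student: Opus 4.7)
The plan is to reduce the statement to Lisca's explicit classification. In \cite{lisca1,lisca2}, Lisca identified the two-bridge links $L(p,q)$ that bound $\chi=1$ surfaces in $D^4$ with no closed components, and for each such link exhibited a ribbon surface built as an explicit sequence of band moves on the standard alternating 4-plat diagram. My strategy is to show that, possibly after inserting generalised Tsukamoto moves between consecutive bands, this sequence exhibits the link as algorithmically ribbon in the sense of \Cref{def:algsurf}.

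The argument would proceed by induction on the complexity of the continued fraction expansion of $p/q$, say the sum of its partial quotients (equivalently, the crossing number of the reduced 4-plat diagram). For the base cases — the unknot, the two-component unlink, the Hopf-related link, and the stevedore knot $6_1$ — I would check by hand that Lisca's first band is either length one with twist $\pm 1$ into a nonalternating crossing, or length two untwisted (as in \Cref{def:algband}), and that the resulting diagram becomes the empty unlink after a short sequence of generalised Tsukamoto moves, with bifactorisability visibly preserved throughout.

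For the inductive step, given $L=L(p,q)$ in Lisca's list and the first band $\beta$ of his ribbon surface, let $D'$ denote the result of applying $\beta$ to the 4-plat diagram $D$. The key observation is that $\beta$ is automatically a candidate algorithmic band: it has length at most two and the required twist behaviour, since its trace surface in $D^4$ is by construction a ribbon surface extending to $L$. To check bifactorisability of $D'$, I would apply \Cref{prop:bif} to the pair $(L',\Delta')$, where $L'$ is the link represented by $D'$ and $\Delta'$ is the portion of Lisca's ribbon surface remaining after $\beta$ is cut off; this surface satisfies $\chi(\Delta')=k'+l'$ since the first band contributed one nonalternating crossing and preserved the number of diagram components. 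The nullity and rank conditions of \Cref{prop:algband} then follow from the identification of $\Lambda^0_b, \Lambda^0_w$ in \Cref{lem:latdim}, together with a crossing/region count before and after $\beta$. After a finite sequence of generalised Tsukamoto moves (which preserve bifactorisability by \Cref{def:gTsuk}), the diagram $D'$ returns to the standard alternating 4-plat form of a simpler link in Lisca's list, to which the induction hypothesis applies.

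The hard part is the bookkeeping of the intermediate diagrams: after $\beta$, the 4-plat diagram is typically nonalternating and nonminimal, and one must produce a canonical simplification back to a 4-plat form via the moves in Figures \ref{fig:flype}--\ref{fig:tripleflype}. Showing that Lisca's recursive families are closed under this simplification, and that the closed positive-definite 4-manifold $X$ built from $F_b\cup -\Delta'$ continues to admit a standard lattice embedding at each stage (via Donaldson's theorem, as in the proof of \Cref{prop:bif}), is the main technical obstacle. Once this is established, the induction closes and the proposition follows.
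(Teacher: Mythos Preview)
Your proposal has two genuine gaps.

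First, the claim that Lisca's band $\beta$ ``is automatically a candidate algorithmic band: it has length at most two and the required twist behaviour, since its trace surface in $D^4$ is by construction a ribbon surface'' is a non-sequitur. Nothing about a band being part of a ribbon surface forces it to have length $\le 2$ or the twist specified in \Cref{def:algband}; indeed, the entire point of the escapee phenomenon in \Cref{sec:escapees} is that ribbon bands need not be algorithmic. You may be hoping that \Cref{prop:algband} does this work for you, but that proposition takes as \emph{hypotheses} that $D'$ is bifactorisable and that the ranks in \eqref{eq:banddim} are preserved, and even then its conclusion allows length-three bands and untwisted length-one bands splitting connected sums, neither of which is algorithmic in the sense of \Cref{def:algband}. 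Your appeal to \Cref{prop:bif} to get bifactorisability of $D'$ from the residual surface $\Delta'$ is reasonable, but it does not establish the length and twist conditions; those must be checked directly on Lisca's explicit bands.

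Second, the inductive structure is miscast. Slice two-bridge links require only a \emph{single} band: Lisca's ribbon surfaces have exactly one saddle, after which the link is already the two-component unlink. There is no ``simpler link in Lisca's list'' to recurse to; what remains is to simplify a $1$-almost-alternating diagram of the unlink to the crossingless diagram via generalised Tsukamoto moves. This is precisely what the paper does: it lists the six Lisca families explicitly, exhibits in \Cref{fig:Liscadisks} a single band for each that visibly has the form required by \Cref{def:algband}, and then in \Cref{fig:alg2bridge} carries out the Tsukamoto simplification in full for one family, leaving the analogous check for the others. The argument is direct and diagrammatic rather than inductive, and the ``hard part'' you identify---returning to a smaller 4-plat---does not arise.
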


\begin{proof}
For the definition and notation for two-bridge links, see \cite{lisca1} or \cite{balls}. Two-bridge links which bound smoothly embedded Euler characteristic one surfaces in the 4-ball without closed components were classified and shown to bound ribbon surfaces by Lisca \cite{lisca1}.   Up to mirroring, these may be divided into six families as follows:
\begin{enumerate}[(a)]
    \item $S(m^2,mq\pm1)$, with $m>q>0$ and $(m,q)=1$;
    \item $S(m^2,mq\pm1)$, with $m>q>0$ and $(m,q)=2$;
    \item $S(m^2,d(m-1))$, with $m^2/(m-1)>d$ and $d$ divides $2m+1$;
    \item $S(m^2,d(m-1))$, with $m^2/(m-1)>d$ and $d$ odd divides $m-1$;
    \item $S(m^2,d(m+1))$, with $m^2/(m+1)>d$ and $d$ odd divides $m+1$;
    \item $S(m^2,d(m+1))$, with $m^2/(m+1)>d$ and $d$  divides $2m-1$.
\end{enumerate}
A band move leading to a ribbon surface of Euler characteristic one is shown for each of these in \Cref{fig:Liscadisks} (cf. \cite{bbl,lisca1}).  It remains to verify that these surfaces are in fact algorithmically ribbon. This is shown in detail for family (d) in \Cref{fig:alg2bridge}; the other cases are similar and left to the reader.
\end{proof}

\begin{figure}[htbp]
\centering
\includegraphics[scale=0.3]{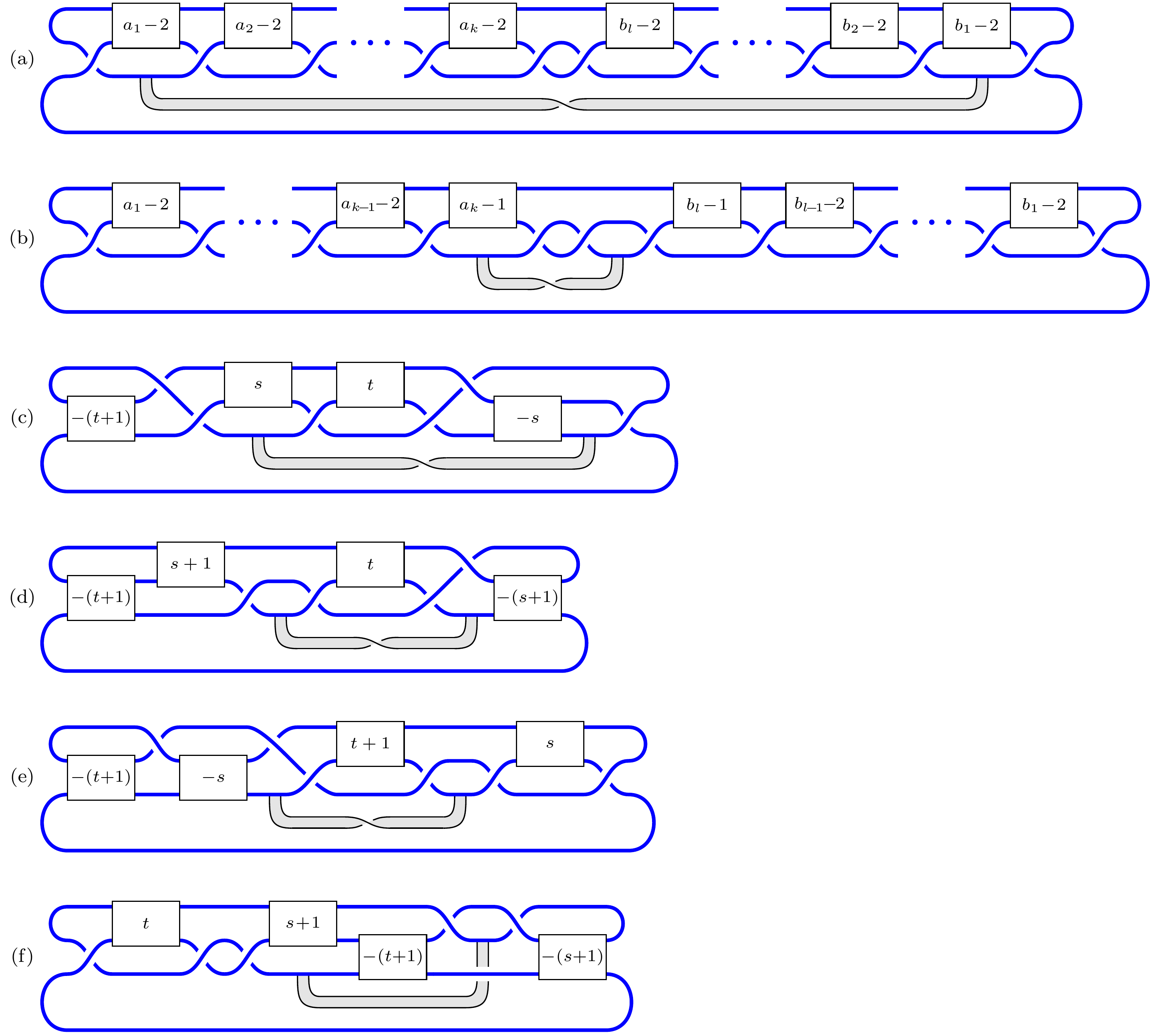}
\begin{narrow}{0.3in}{0.3in}
\caption
{{\bf Two-bridge links and ribbon surfaces.} Here $s$ and $t$ can be any nonnegative integers, while $m/q=[a_1,\dots,a_k]:=a_1-\dfrac{1}{\displaystyle{a_2}-\raisebox{-3mm}{$\ddots$
\raisebox{-2mm}{${-\dfrac{1}{\displaystyle{a_k}}}$}}}$ and $m/(m-q)=[b_1,\dots,b_l]$.}
\label{fig:Liscadisks}
\end{narrow}
\end{figure}

\begin{figure}[htbp]
\centering
\includegraphics[scale=0.3]{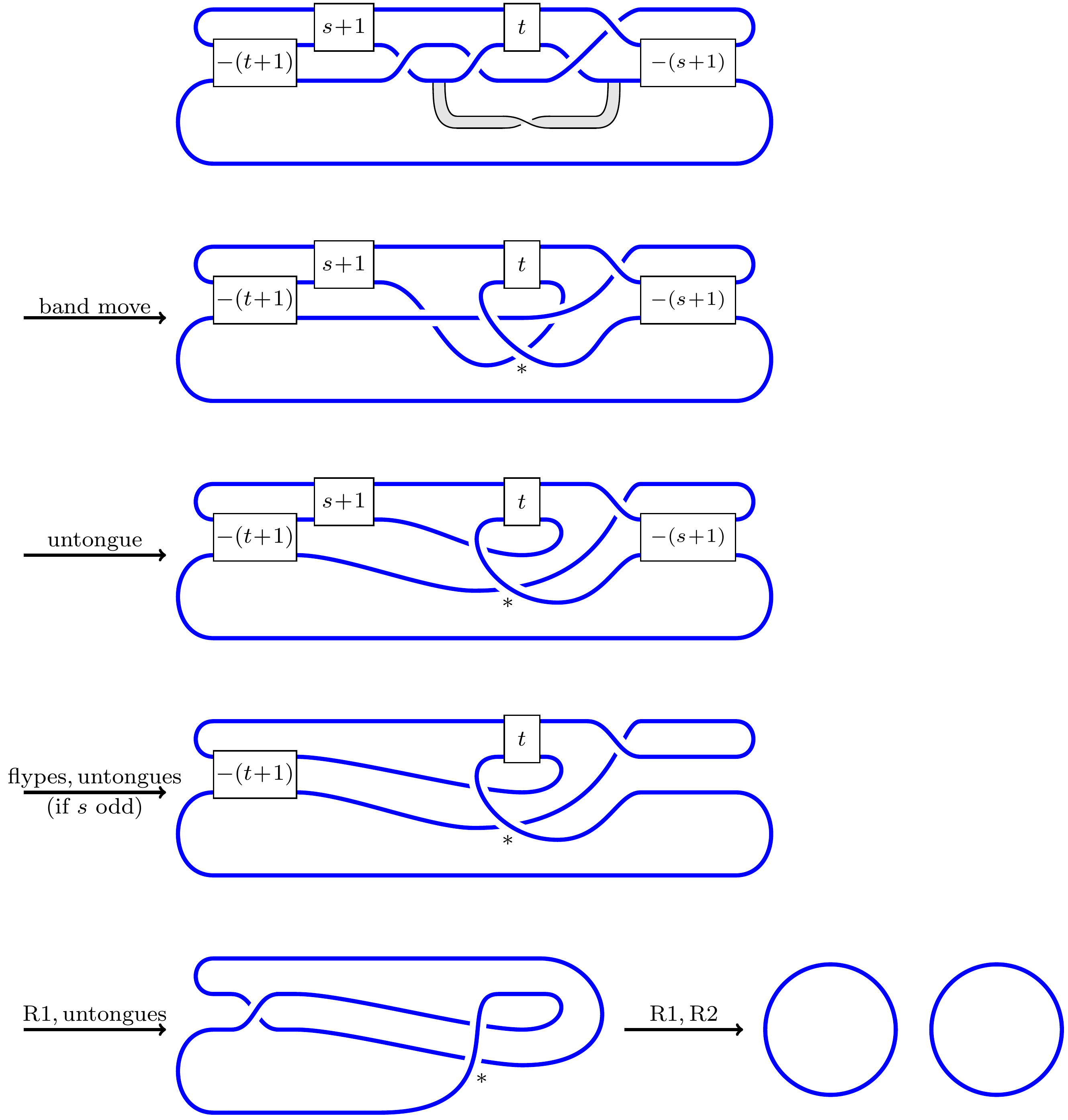}
\begin{narrow}{0.3in}{0.3in}
\caption
{{\bf algorithmic ribbon surfaces for 2-bridge knots and links, family (d).} For $s$ even, there is a minor modification to make in the fourth diagram.  Note that one can easily obtain the bifactorizations for each diagram by working back from the crossingless diagram at the bottom.} 
\label{fig:alg2bridge}
\end{narrow}
\end{figure}

\begin{proposition}
\label{prop:Lsum-L}
Let $L$ be an oriented nonsplit alternating link with a marked  component, and let $-L$ denote the mirror of $L$, with reversed orientation.  The connected sum $-L\#L$, using the marked components to take connected sum, is algorithmically ribbon.
\end{proposition}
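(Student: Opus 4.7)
The plan is to construct an explicit algorithmic ribbon surface for $-L \# L$ by induction on the crossing number of an alternating diagram of $L$. The induction exploits the mirror symmetry of $-L \# L$ to peel off mirror pairs of crossings, one pair per algorithmic band.

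\textbf{Setup.} I would start with a connected alternating diagram $D$ of $L$ with the marked component passing through a point $p$ on the boundary of a disk containing $D$. Placing a mirror copy $-D$ of $D$ in an adjacent disk across a line $\ell$ tangent at $p$ and joining at $p$ produces a connected alternating diagram $\tilde{D}$ of $-L \# L$. Because $\tilde D$ is alternating with no nonalternating crossings, it is bifactorisable: the argument in the proof of \Cref{lem:semidef} identifies $X = X_1 \cup X_2$ with $\#_{c(\tilde D)}\CP^2$, so the inclusion-induced maps give finite-index embeddings of $\Lambda_b$ and $\Lambda_w$ into standard diagonal lattices $\zz^{b_2(X_1)}$ and $\zz^{b_2(X_2)}$, as required by \eqref{eq:latemb}.

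\textbf{Induction on $c(D)$.} The base case $c(D)=0$ is immediate: since $L$ is nonsplit, $L$ is the unknot, $\tilde D$ is an alternating diagram of the unknot, and it reduces to the crossingless diagram by Reidemeister moves (all of which are generalised Tsukamoto moves). For the inductive step with $c(D)\geq 1$, I would pick a crossing $c$ of $D$ adjacent to the connected-sum arc, whose mirror $c^*$ sits symmetrically opposite in $-D$, and construct an algorithmic band $\beta$ of length two and twist zero whose core runs across $\ell$, cancelling the pair $\{c,c^*\}$ in the sense that after the band move and a subsequent sequence of flypes, R2 moves, and untongue moves, the diagram simplifies to the symmetric diagram $\tilde{D'}$ of $-L'\# L'$ for some nonsplit alternating $L'$ with $c(D') = c(D)-1$, possibly together with a split unknotted component contributing an extra crossingless disk to the surface. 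Bifactorisability is preserved at each stage: by construction at the band move, and by definition for the Tsukamoto moves. The inductive hypothesis then supplies an algorithmic sequence for $-L'\# L'$, which concatenated with $\beta$ and the intervening Tsukamoto moves yields one for $-L\# L$.

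\textbf{Main obstacle.} The hard part is the explicit description of $\beta$ and the verification that the Tsukamoto simplification after the band move cleanly yields $\tilde{D'}$ (together with a split unknot). This requires a case analysis indexed by the local structure around $c$ (whether the marked arc meets $c$ on the over- or under-strand, and the adjacencies of the regions incident to $c$ and to the connected-sum arc). The most efficient presentation is probably via labelled figures analogous to \Cref{fig:alg2bridge}, exhibiting the choice of $\beta$ and the ensuing Tsukamoto reduction for each local configuration; the routine bookkeeping of bifactorisations at intermediate stages may be deferred by noting that the final crossingless diagram admits a trivial bifactorisation and that the bifactorisation at each earlier stage can be recovered by working backwards, as observed at the end of \Cref{sec:moves}.
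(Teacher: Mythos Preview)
Your proposal has a concrete error and a structural gap.

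\textbf{The error.} In the Setup you claim that every connected alternating diagram is bifactorisable, deducing this from the identification $X_1\cup X_2\cong\#_{c(\tilde D)}\CP^2$ in \Cref{lem:semidef}. This is false: that identification only gives an embedding $\Lambda_b\hookrightarrow\zz^{b_2(X)}=\zz^{b_2(X_1)+b_2(X_2)}$, not into $\zz^{b_2(X_1)}$. For instance the trefoil has black Goeritz lattice $(3)$, which does not embed in $\zz^1$; the trefoil is not bifactorisable. The diagram $\tilde D$ \emph{is} bifactorisable, but the reason is that $-L\#L$ is slice, so \Cref{prop:bif} applies; alternatively, the paper writes down an explicit bifactorisation by summing local contributions at each crossing. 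Your closing remark about recovering bifactorisations by working backwards is correct and would rescue this, but the argument as written in the Setup is wrong.

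\textbf{The gap.} Your inductive claim --- that one algorithmic band followed by generalised Tsukamoto moves returns the diagram to the symmetric form $\tilde{D'}$ of $-L'\#L'$ (possibly with a split unknot) for some nonsplit alternating $L'$ with $c(D')=c(D)-1$ --- is not established, and in fact fails on the smallest example. Take $L$ the trefoil, so $\tilde D$ is a $6$-crossing diagram of the square knot. A single fission band produces a $2$-component link. But if $c(D')=2$, then $L'$ must be the Hopf link, whence $-L'\#L'$ has three components; with or without an extra split unknot, the component counts do not match. The square knot is in fact algorithmically ribbon via a single band to the $2$-component unlink, i.e.\ the reduction jumps straight to $c(D')=0$. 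So the step size ``one crossing at a time'' is not what happens, and your proposed case analysis cannot produce what you claim.

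\textbf{How the paper proceeds instead.} The paper avoids any attempt to return to a canonical $-L'\#L'$ form after each band. It applies all $n-1$ bands consecutively to the symmetric diagram, each one (a length-two algorithmic band composed with an untongue) removing a crossing from the tangle $\Gamma$ and symmetrically from $\overline\Gamma$ while adding structure outside the tangles. The nontrivial point is reachability: one must show that every crossing of $\Gamma$ can be hit by such a band, and this is done via a ``zig-zag path'' argument --- paths through $\Gamma$ keeping black regions on the right --- together with nonsplitness of $L$. Only after all $n-1$ bands does one simplify, and the resulting diagram (\Cref{fig:Lsum-Lbanded}) reduces to a crossingless unlink by R1 and R2 moves alone. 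This global strategy sidesteps exactly the component-count obstruction that breaks your induction.
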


\begin{proof}
Draw an alternating diagram of $-L\#L$ as shown in \Cref{fig:Lsum-L}.  We will describe a bifactorization of this diagram, or equivalently lattice embeddings
\begin{equation}
\label{eq:Lsum-L}
\begin{aligned}
\Lambda_b&\rightarrow\zz^{n},\quad\mbox{and}\\
\Lambda_w&\rightarrow\zz^{n}
\end{aligned}
\end{equation}
from the Goeritz lattices to diagonal unimodular lattices whose rank $n$ is equal to the crossing number of $L$.
Label the crossings in the tangle $\Gamma$ from $1$ to $n-1$, and label each crossing in the tangle $\overline{\Gamma}$ with the same label as its reflected image in $\Gamma$.  Label both crossings shown in \Cref{fig:Lsum-L} with $n$.  The embeddings \eqref{eq:Lsum-L} are given by summing local contributions in each region, where the crossing labelled $i$ gives the local contributions shown in \Cref{fig:localemb}. One can alternatively reverse the steps in the sequence described in the proof and recover the bifactorization step by step from that for the crossingless unlink, as discussed after \Cref{def:algsurf}.

We now apply a sequence of band moves as shown in \Cref{fig:sumband}, one for each of the $n-1$ crossings in the tangle $\Gamma$.
There are two choices for where to apply the first of these, depending on which crossing in \Cref{fig:Lsum-L} will take the role of the central crossing in the left side of \Cref{fig:sumband}.  Each such band move removes one crossing from the original tangle $\Gamma$, and symmetrically from $\overline{\Gamma}$.  Outside the modified tangles, two new alternating crossings are introduced on either side of a previously alternating crossing, which becomes nonalternating, as in Figures \ref{fig:sumband} and \ref{fig:Lsum-Lbanded}.

We need to see that we may apply $n-1$ such bands and thus replace $\Gamma$ with a crossingless tangle.  Note from \Cref{fig:sumband} that a subsequent band coming into $\Gamma'$ from one of the new nonalternating crossings outside $\Gamma'$ as in \Cref{fig:Lsum-Lbanded} may turn right or left when it comes to the site of any crossing removed by a previous band.  It will then be constrained to follow a path through the original tangle $\Gamma$ which keeps black regions to the right and white regions to the left.  We therefore need to see that every crossing in $\Gamma$ is connected to either the top or bottom incoming strand by a zig-zag path that turns right or left at each crossing it encounters, keeping black regions on the right.  First note that since $L$ was assumed to be nonsplit, the tangle $\Gamma$ has at most two diagram components, so that every crossing is connected to either the top or bottom incoming strand by a path in the diagram.  Then given any edge in such a path with a white region on the right, replace it in the path by the rest of the edges of the same white region, which will then have black on the right and white on the left.

The result of these $n-1$ band moves is shown in \Cref{fig:Lsum-Lbanded}.  
We claim that this is in fact a diagram of the $n$-component unlink, and converts to the crossingless diagram by a sequence of Reidemeister 1 and 2 moves as in \Cref{fig:AAreidemeister}.

There are a total of $2n+2$ strands entering the crossingless tangle $\Gamma'$: one each on the right and left, and the rest on the top or bottom.  Suppose first that any pair of strands coming into the top are connected inside $\Gamma'$.  Choose an innermost such pair, which are necessarily adjacent.  By symmetry the same pair of strands is connected inside $\overline{\Gamma'}$ and we may apply a Reidemeister 2 move as in \Cref{fig:AAreidemeister} to split off a crossingless unknot.  The same applies to strands on the bottom; thus after splitting off some crossingless unknots, we may assume that all strands in $\Gamma'$ enter and leave by a different side of the tangle.  The strand entering $\Gamma'$ from the left then exits via the leftmost strand on the top or bottom, and the strand entering $\Gamma'$ from the right exits via the rightmost strand on the top or bottom, and these choices completely determine $\Gamma'$.  In each case we find the resulting diagram simplifies to a crossingless unlink via two Reidemeister 1 moves and a number of Reidemeister 2 moves as in \Cref{fig:AAreidemeister}.  

The sequence described above, consisting of $n-1$ algorithmic band moves, $n-1$ untongue moves, $n-1$ Reidemeister 2 moves, and two Reidemeister 1 moves, represents an algorithmic ribbon surface for $-L\#L$.
\end{proof}

\begin{figure}[htbp]
\centering
\includegraphics[scale=0.6]{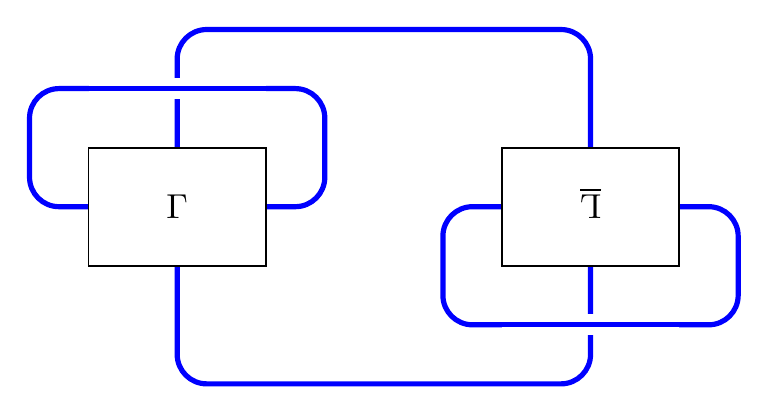}
\begin{narrow}{0.3in}{0.3in}
\caption
{{\bf An alternating diagram of $-L\#L$.}  The tangle marked $\overline{\Gamma}$ is the image of the tangle marked $\Gamma$ under reflection in a plane perpendicular to the plane of the diagram.} 
\label{fig:Lsum-L}
\end{narrow}
\end{figure}

\begin{figure}[htbp]
\centering
\includegraphics[scale=0.8]{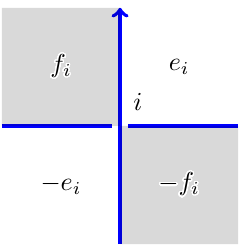}
\begin{narrow}{0.3in}{0.3in}
\caption
{{\bf Local contributions to lattice embeddings.}  The orientation on the overcrossing comes from that on $-L\#L$. } 
\label{fig:localemb}
\end{narrow}
\end{figure}

\begin{figure}[htbp]
\centering
\includegraphics[scale=0.7]{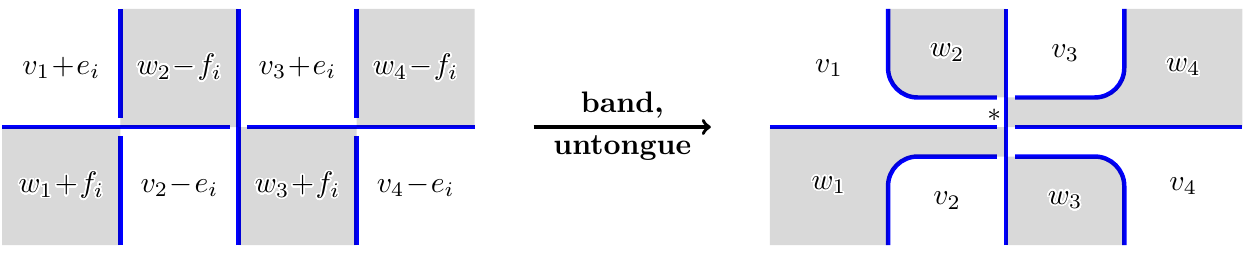}
\begin{narrow}{0.3in}{0.3in}
\caption
{{\bf A band move.} This is equivalent to composing a length two algorithmic band placed above or below the horizontal strand with an untongue move.  } 
\label{fig:sumband}
\end{narrow}
\end{figure}

\begin{figure}[htbp]
\centering
\includegraphics[scale=0.8]{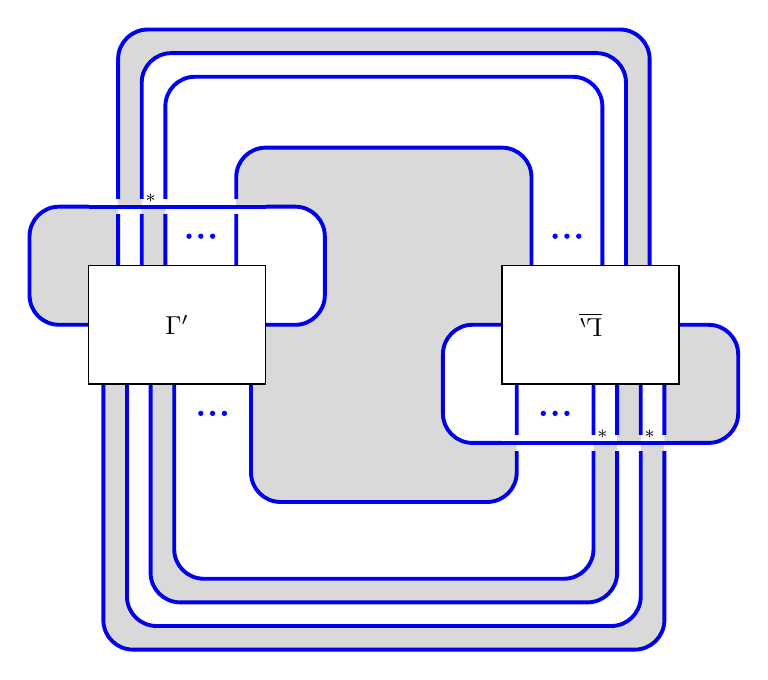}
\begin{narrow}{0.3in}{0.3in}
\caption
{{\bf The result of band moves on $-L\#L$.}  The tangle marked $\overline{\Gamma'}$ is the image of the tangle marked $\Gamma'$ under reflection in a plane.  After $n-1$ band moves, the tangles $\Gamma'$ and $\overline{\Gamma'}$ are crossingless.} 
\label{fig:Lsum-Lbanded}
\end{narrow}
\end{figure}

We conclude this section with an optimistic conjecture.  This is based on experimental evidence\footnote{It has been directly verified via KLO that no band of length at most $3$ and absolute twist at most $2$ on any non-algorithmically-ribbon alternating knot up to $17$ crossings results in an unlink.} and is perhaps encouraged by a comparable theorem of McCoy about unknotting numbers of alternating knots \cite{mccoy}.  

\begin{conjecture}
\label{conj:fusion1}
Let $L$ be an alternating link which bounds a ribbon surface with two minima and a single saddle point.  Then $L$ is algorithmically ribbon.
\end{conjecture}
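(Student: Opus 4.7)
The plan is to show that the hypothesised band for $L$ can be realised as an algorithmic band on a suitable reduced alternating diagram of $L$, after which Tsukamoto's theorem does the rest of the work.

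First I would fix a reduced alternating diagram $D$ of $L$. Since $L$ bounds a ribbon surface of Euler characteristic $1$, and since $D$ has $k=0$ nonalternating crossings and $l=1$ diagram component (so $\chi=k+l=1$), \Cref{prop:bif} automatically gives bifactorisability of $D$. Thus the starting data for the algorithm is in place.

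Next I would arrange the hypothesised band $\beta$ in standard position on $D$. The heart of the argument is to show that, possibly after flyping and applying generalised Tsukamoto moves to $D$, one may take $\beta$ to have length at most $2$ and to satisfy the twist condition of \Cref{def:algband}. This is the step that parallels McCoy's theorem that an unknotting operation on an alternating knot is realised by a crossing change on some alternating diagram; here the analogous realisability is for a ribbon band rather than a crossing change. \Cref{prop:algband} already tells us that any nullity-increasing, rank-preserving, bifactorisability-preserving band must be length $1$, $2$, or $3$ of a very constrained form, so the task is really to exhibit a band realising the passage from $L$ to the two-component unlink in one of these forms on a reduced alternating diagram of $L$.

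Once the band is algorithmic, the result of the band move is a $1$-almost alternating diagram $D'$ of the two-component unlink. Since this unlink is strongly slice via two disjoint disks with $\chi = 2 = k + l$, \Cref{prop:bif} again gives bifactorisability of $D'$. \Cref{thm:tsuk} then says that $D'$ reduces to the crossingless two-component diagram by a finite sequence of flypes, untongue moves, Reidemeister 1 moves, and a single Reidemeister 2 move, all of which are generalised Tsukamoto moves in the sense of \Cref{def:gTsuk}. Concatenating the algorithmic band move with these simplifications exhibits $L$ as algorithmically ribbon.

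The main obstacle is the realisability step. While the lattice/bifactorisability bookkeeping at the two endpoints of the surface movie is free thanks to \Cref{prop:bif}, and the terminal simplification is free thanks to \Cref{thm:tsuk}, there is no a priori reason that a band topologically realising $L\to$ two-component unlink can be isotoped to an algorithmic band on an alternating diagram of $L$. The analogy with McCoy is suggestive, and the footnote records substantial experimental evidence, but a proof would likely require a careful case analysis of how a ribbon band intersects an alternating tangle, together with manipulations of the bifactorisation data that replace inadmissible bands with admissible ones. This is precisely the delicate combinatorial step that separates the conjecture from a theorem.
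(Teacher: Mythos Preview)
The statement you are attempting to prove is \Cref{conj:fusion1}, which in the paper is a \emph{conjecture}, not a theorem: the paper offers no proof, only experimental evidence (the footnote records that no short band on any non-algorithmically-ribbon alternating knot up to 17 crossings produces an unlink) and an analogy with McCoy's unknotting theorem. So there is no ``paper's own proof'' to compare against.

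Your proposal is not a proof either, and to your credit you say so explicitly. The bookkeeping you lay out is correct: bifactorisability of the starting alternating diagram follows from \Cref{prop:bif}; if an algorithmic band existed, the resulting 1-almost-alternating diagram of the 2-component unlink would again be bifactorisable and would simplify to the crossingless diagram by \Cref{thm:tsuk}. But the entire content of the conjecture is the realisability step you flag as ``the main obstacle'': showing that \emph{some} band taking $L$ to the 2-component unlink can be isotoped, on \emph{some} alternating diagram of $L$, into one of the forms in \Cref{def:algband}. Nothing in the paper supplies this, and \Cref{prop:algband} does not help here since it assumes bifactorisability of the band result rather than producing it. Your final paragraph is an accurate assessment: what you have written is a reduction of the conjecture to a McCoy-type realisability statement for ribbon bands, together with an honest admission that this reduction is where all the difficulty lies.
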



\section{Computer Implementation}
\label{sec:imp}

Code pertaining specifically to the algorithm described above (henceforth referred to as the Algorithm) was written in C++ as a custom computational module for the Knot-Like Objects (KLO) software project by F.~Swenton \cite{klo}, which had pre-existing functionality for dealing with knots and bands on knots.  Specific methods were coded for computing Goeritz matrices for a given diagram, searching for factorisations of those matrices, and searching for algorithmic bands to add to a given knot such that the band result has a bifactorisation extending that of the original knot.  Additionally, the generalised Tsukamoto moves, here referred to as Almost-Alternating (AA) simplification moves, were implemented in order to algorithmically simplify the results of the bands, and code was added to find all flyped forms of a diagram.
Broadly, the Algorithm functions as a breadth-first search, as follows:

\vspace{6pt}
{\small
\noindent
Input: A knot diagram (initially, of an alternating knot of a single component)

\begin{enumerate}
\item%
Find both Goeritz matrices for the diagram.

\item%
Declare the knot non-slice if either matrix does not factorise. 

\item%
Otherwise, for each flype-equivalent diagram:

\begin{enumerate}

\item%
Find all factorisations of each of the two Goeritz matrices, up to $\mathop{\mathrm{Aut}}(\mathbb{Z}^n)$-symmetries (note that a factorization of one flyped form allows direct computation of those of the rest, greatly speeding up this step).

\item%
For each candidate algorithmic band as in Figure \ref{fig:algbands}:

\begin{enumerate}
\item%
Test whether the band is \emph{algorithmic}, i.e., whether the assignment of Goeritz $\mathbb{Z}$-vectors for the faces of the diagram unaffected by the band can be extended into a Goeritz bifactorisation for the entire diagram resulting from the band operation.

\begin{enumerate}
\item%
If so, AA-simplify the resulting diagram.  If it simplifies to an unlink diagram with no crossings, declare a ribbon knot to be found; otherwise, add the resulting diagram as a child of the starting diagram.

\item%
If not, declare a dead-end at the resulting diagram.
\end{enumerate}  
\end{enumerate}

\end{enumerate}
\end{enumerate}

\noindent 
Iterate the Algorithm on all children until a ribbon knot is declared to have been found or only dead ends remain.

\vspace{6pt}

\noindent Output:~~\begin{minipage}[t]{5in}
The knot is declared nonslice in Step (2) if the alternating diagram is not bifactorisable.  If the initial diagram is bifactorisable, then if any descendant is identified as ribbon, the precise sequence of flypes, algorithmic bands, and simplifications needed to produce an unlink from the original diagram are exhibited, and the original knot is declared \emph{algorithmically ribbon}.
Otherwise, the knot is declared \emph{algorithmically non-ribbon} (we note that a comparatively small number of examples show this algorithm to be incomplete in its current form---i.e., some alternating ribbon knots will be declared as \emph{algorithmically} non-ribbon---which is why this distinction is made).
\end{minipage}
}

\vspace{6pt}

A few notes on this implementation are in order.  First, checking all variants of the 3-flype move in Figure~\ref{fig:tripleflype} in the original algorithm would result in a branching of the search tree, so to simplify matters, a complete floating loop over the relevant 3-tangle is introduced instead, as in Figure~\ref{fig:loopthreeflype}, which also preserves bifactorisability and only differs in that it has one additional crossing.  As no subsequent bands will connect to this loop, it is clear that any knot found to be algorithmically ribbon with the floating loop will still, in fact, be ribbon (simply remove it).  It is not \emph{a priori} clear that, in general, this version of the 3-flype yields the same results in the Algorithm as do the other versions of the 3-flype (or, indeed, that they produce the same results as one another), but it has been directly verified that each of the knots encountering a 3-flype but \emph{not} identified as ribbon in the computation below is also not algorithmically ribbon via any of the non-loop versions of the 3-flype.

\begin{figure}[htbp]
\centering
\includegraphics[scale=0.5]{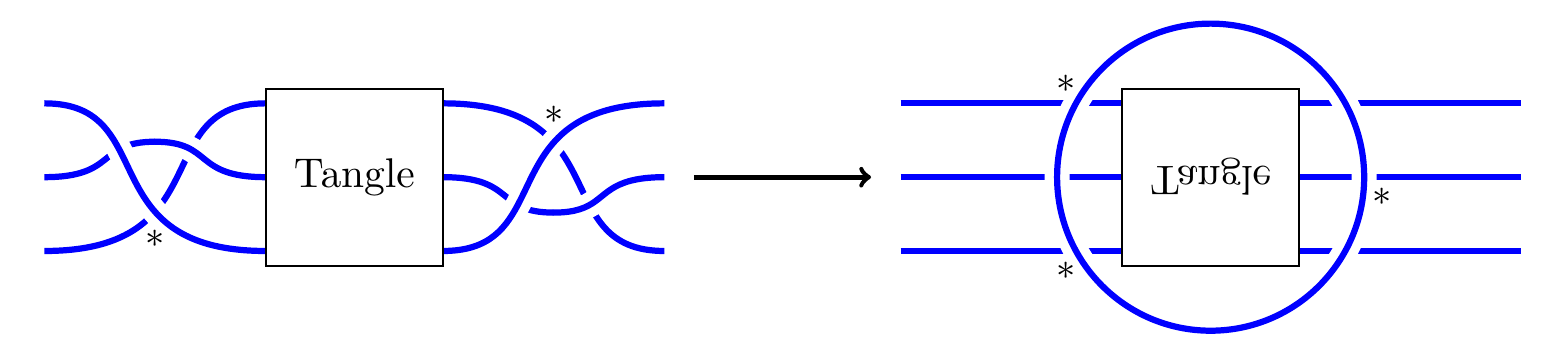}
\begin{narrow}{0.3in}{0.3in}
\caption
{\bf The ``floating loop'' 3-flype move.} 
\label{fig:loopthreeflype}
\end{narrow}
\end{figure}

\section{Results}
\label{sec:results}

The Prime Alternating Knot Generator (PAKG) \cite{PAKG} from Flint, Rankin, and de Vries was used to generate complete enumerations of prime alternating knots up to 21 crossings; the Algorithm was applied to those having square determinant.  Table~\ref{ResultsTable} summarizes the results of the computation, listing by crossing range: the number of prime alternating knots (PAK) up to reflection; the number of these with square determinant ($\square$ det); the number with Goeritz bifactorisations; the number found to be algorithmically ribbon (AR); and the numbers of bands that the Algorithm takes to decompose these knots into unlinks (i.e., the number of index-one critical points in the ribbon disk that the Algorithm exhibits for these knots).

\begin{table}[!h]
\centering
\begin{tabular}{r|r|r|r|r|l}
\textbf{$\times$'s} & \textbf{PAK} & \textbf{$\square$~det} & \textbf{Bifac} & \textbf{AR} & \textbf{Bands} \\\hline
\vrule width 0pt height 13pt to 11 & 563 & 36 & 28 & 28 & $1^{28}$ \\ 
12 & 1,228 & 62 & 51 & 48 & $1^{47}\cdot2^1$ \\
13 & 4,878 & 175 & 138 & 118 &  $1^{118}$ \\
14 & 19,536 & 567 & 409 & 305 & $1^{301}\cdot 2^4$ \\
15 & 85,263 & 1,921 & 1,245 & 850 & $1^{805}\cdot 2^{45}$ \\
16 & 379,799 & 6,888 & 3,724 & 2,330 & $1^{2,033}\cdot 2^{297}$ \\
17 & 1,769,979 & 24,828 & 11,259 &6,513 & $1^{5,384}\cdot 2^{1,129}$ \\
18 & 8,400,285 & 91,486 & 34,197 & 19,071 & $1^{13,731}\cdot 2^{5,333} \cdot 3^7$\\
19 & 40,619,385 & 349,453 & 99,429 & 52,325 & $1^{35,270}\cdot 2^{17,039} \cdot 3^{16}$\\
20 & 199,631,989 & 1,322,355 & 287,923 & 150,408 & $1^{87,623}\cdot 2^{62,301} \cdot 3^{484}$\\
21 & 990,623,857 & 5,258,538 & 864,649 & 430,907 & $1^{225,985}\cdot 2^{203,320} \cdot 3^{1,602}$\\\hline
\vrule width 0pt height 13pt {Total} & {1,241,536,199} & {7,056,273} & {1,303,024} & {662,903} & ${1^{371,325}\cdot2^{289,469}\cdot 3^{2,109}}$
\end{tabular}
\vspace{6pt}

\caption{{\bf Computational results of the Algorithm.} In the rightmost column, $a^b$ indicates that $b$ of these knots have algorithmic ribbon disks with not less than $a$ band moves.}
\label{ResultsTable}
\end{table}

The Algorithm thus directly yields 662,903 prime alternating ribbon knots up to 21 crossings, exceeding by far the previous state of the art (see for example \cite{knotinfo}). We note that the first occurrence of a knot requiring two bands is at 12 crossings, and the first requiring three bands is at 18 crossings.  Also, we see a diminishing proportion of Goeritz-bifactorisable knots being algorithmically ribbon as the crossing number increases: this starts with 28/28 being algorithmically ribbon up to 11x and 48/51 at 12x---the three left out being 12a631 (ribbon, discussed in the next section), 12a360, and 12a1237 (both nonslice \cite{knotinfo})---and ends with fewer than half at 21x.

\section{Intermediate forms and escapees}
\label{sec:escapees}

The Algorithm explicitly exhibits any algorithmically ribbon knot as being, in fact, a ribbon knot; however, when the Algorithm's search fails, we have no absolute assurance that the knot in question is \emph{not} ribbon.  A small number of examples are known of alternating ribbon knots that the Algorithm fails to identify as such---indeed, Seeliger \cite{Seeliger} exhibits the knots 12a631, 14a6639, and 14a7977 as ribbon.  These knots provide our first three examples of what we'll call  \emph{escapees} [from the Algorithm].  We shall see that these knots do not entirely escape the Algorithm---they just make a detour at the start of their journey (though a complete understanding of this phenomenon has proved to be elusive).

A preliminary search for escapees began with the search data up to 18 crossings by extracting every non-split diagram of two components obtained along a path (of flypes, bands, and AA-simplifications) from an alternating knot that was identified as ribbon, resulting in a list of 17,739 diagrams.  We then enumerated all fusion bands, of length up to $4$ and absolute twist up to $2$, on these knots and tested the results of these bands for hyperbolic isometry with all hyperbolic prime alternating knots up to 18 crossings that were potential escapees (i.e.,  Goeritz-bifactorisable but not algorithmically ribbon).

In the course of this preliminary search, it was noted that all escapees so identified were obtainable via certain \emph{escape bands} of length 3, proceeding as in \Cref{fig:escapebands}, either entirely over or under with no twist (left) or over and under with an alternating half-twist (right).  It is worth noting that these band types introduce into an alternating knot exactly \emph{two} non-alternating crossings, and thus they are not quite algorithmic, though they are as close as possible to being so.  While the diagrams immediately resulting from these bands cannot bifactor,\footnote{One of the Goeritz lattices of the diagram resulting from an escape band is positive semi-definite, and it follows from Donaldson's theorem that if this band is part of the sequence for a ribbon disk then the quotient of this lattice by its nullspace admits a finite-index embedding in a diagonal lattice.  Thus the methods of this paper can be applied to obstruct some escape bands; however after applying unobstructed escape bands, one does not obtain a minimally nonalternating diagram and the methods described up to now do not apply.} sequences of Reidemeister moves were found for each diagram that transform it into a bifactoring 2-component link diagram that the Algorithm identifies as ribbon.

\begin{figure}[htbp]
\centering
\includegraphics[scale=0.6]{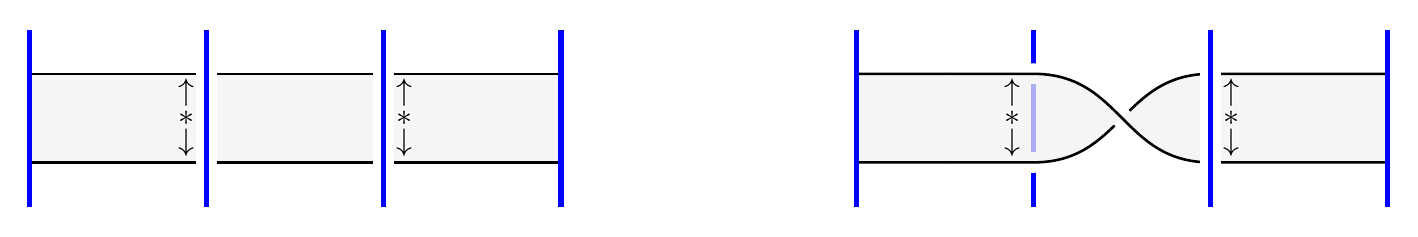}
\begin{narrow}{0.3in}{0.3in}
\caption
{{\bf Escape bands.} }
\label{fig:escapebands}
\end{narrow}
\end{figure}

It should be stressed that the Algorithm is entirely combinatorial and that it bypasses any issue of knot (or unlink) identification, because the Algorithm terminates either when an unlink diagram---i.e., a diagram with no crossings---is obtained or when all of a knot's descendants in the Algorithm are dead ends having no algorithmic bands.  Our search for escapees lacks this advantage, so we used geometric methods as an aid.\footnote{All computations involving hyperbolic structure and hyperbolic knot isometry were performed via the the SnapPea kernel \cite{SnapPea}, which is included in the KLO software.}  Of the 662,903 algorithmically ribbon knots identified, 371,325 require just one band, i.e., after one algorithmic fission, they AA-simplify to an unlink.  The remaining 291,578 each produce a bifactorising link of two components requiring one or (far less frequently) two more bands in the Algorithm before an unlink is produced.  Filtering the 2-component results of the \emph{first} bands on these algorithmically ribbon knots yields 22,477 distinct 2-component \emph{hyperbolic intermediates}, which provide targets for results of escape bands.

To identify candidates for escapees, all 640,149 Goeritz-bifactorising prime alternating knots to 21 crossings \emph{not} identified as ribbon by the Algorithm were checked against two known obstructions to sliceness.  First, the Fox-Milnor condition on the Alexander polynomial \cite{FM} was checked via KLO, with explicit factorisations found that demonstrate 237,909 of these knots not to be slice.  The remaining knots were tested for the Herald-Kirk-Livingston obstruction \cite{HKL}, as implemented in SnapPy \cite{snappy} and SageMath \cite{sage}, for a range of pairs $(p,q)$ of primes, eliminating 393,793 more knots; additionally, one knot at 14 crossings is the closure of the 3-braid $(\sigma_1\sigma_2^{-1})^7$, which is not ribbon \cite{ammmps,sartori}.\footnote{\,The specific obstructions for all non-escapees are recorded at the site containing the search results.}  This leaves 5,006 as neither identified as ribbon by the Algorithm nor obstructed from sliceness; we enumerated all escape bands on each of these knots.  The escape band results that were hyperbolic were tested for hyperbolic isometry with those (if any) of the hyperbolic intermediates of the appropriate hyperbolic volume, and the bands producing such matches were recorded.

\begin{table}[!h]
\centering
\begin{tabular}{r|r|r|r|r|c|r}
\textbf{$\times$'s} & \textbf{Non-AR} & \textbf{Obstr.} & \textbf{Candid.} & \textbf{Escapees} & \textbf{Bands} & \textbf{Unres.} \\\hline
\vrule width 0pt height 13pt 
12 & 3 & 2 & 1 & 1 & $2^1$\\
13 & 20 & 20 & &  & \\
14 & 104 & 100 & 3 & 3 & $2^3$  \\
15 & 395 & 394 & 1 & 1 & $2^1$ \\
16 & 1,394 & 1,365 & 29 & 23 & $2^{23}$ & 6 \\
17 & 4,746 & 4,699 & 47 & 46 & $2^{46}$  & 1 \\
18 & 15,126 & 14,999 & 127 & 91 & $2^{91}$ & 36 \\
19 & 47,104 & 46,795 & 309 & 262 & $2^{257}\cdot 3^{5}$ & 47\\
20 & 137,515 & 136,537 & 978 & 482 & $2^{480}\cdot 3^{2}$ & 496 \\
21 & 433,742 &  430,231 & 3,511 & 821 & $2^{814}\cdot 3^{7}$ & 2,690\\\hline
\vrule width 0pt height 13pt 
{Total} & 640,149 &635,141 & 5,006 & 1,730 & $2^{1,716}\cdot 3^{14}$ & 3,276

\end{tabular}
\vspace{6pt}

\caption{\bf Escapees identified and unresolved knots remaining}
\label{EscapeesTable}
\end{table}
The results of this escapee search are summarised in Table~\ref{EscapeesTable}, which includes for each crossing range up to 21x: the number of Goeritz-bifactorizable prime alternating knots not identified as ribbon by the Algorithm; the number of these obstructed from sliceness; the number of true candidates thus remaining; the number of escapees identified, with the numbers of bands required for these; and the number of knots whose sliceness remained unresolved. A few remarks regarding the escapees and the searches for them are in order:  first, the Algorithm is not known to fail on any alternating knot requiring just one band to split to an unlink.  While this is inherent in the search outlined above, other escapee searches (all results of which are captured by the search described above) only identified escapees requiring at least two bands, as well.  Second, some escape bands on an alternating knot of $n$ crossings can result in an intermediate 2-component link for an algorithmically ribbon knot having strictly more than $n$ crossings, whose intermediate forms might not be among our 22,477 targets.  Indeed:  two escapees at 17x are identified \emph{only} due to intermediate forms resulting from algorithmically ribbon knots of 21 crossings.  Finally, \emph{non-hyperbolic} intermediate 2-component links are inherently missed during this search, so potential additional escapees would be missed should escape bands produce only intermediate forms that are non-hyperbolic; the remedy for this is a fundamentally different knot-identification method not relying solely on hyperbolic geometry.  

In sum, this resolves the sliceness of all but 3,276 of the over 1.2 billion prime alternating knots to 21 crossings.  Details including diagrams of unresolved knots are available from \url{www.klo-software.net/ribbondisks}.
As of the time of publication, no prime alternating knots of 21 crossings or fewer are known to be slice that are not identified either by the Algorithm or the escapee search; it is expected that a number of these 3,276 will be escapees requiring intermediate forms resulting from AR knots at 22 crossings or more.  Further work on identifying ribbon disks for these knots or obstructing their sliceness in some manner continues.

\clearpage

\bibliographystyle{amsplain}
\bibliography{ribbonalg}

\end{document}